\let\mathcal=\mathscr
\newtheorem{thm}{Theorem}[section]
\newtheorem{lemma}[thm]{Lemma}
\newtheorem{prop}[thm]{Proposition}
\newtheorem{defin}[thm]{Definition}
\newtheorem{assumption}[thm]{Assumption}
\theoremstyle{remark}
\newtheorem{rem}[thm]{Remark}
\newtheorem{ex}[thm]{Example}
\newtheorem{war}[thm]{Warning}
\newenvironment{remark}{\begin{rem}\rm}{\qee\end{rem}}
\newenvironment{example}{\begin{ex}\rm}{\qee\end{ex}}
\newcommand{\End}{\mbox{\it End}\,}
\newcommand{\cA}{{\mathcal A}}
\newcommand{\cB}{{\mathcal B}}
\newcommand{\calD}{{\mathcal D}}
\newcommand{\cE}{{\mathcal E}}
\newcommand{\cO}{{\mathcal O}}
\newcommand{\cG}{{\mathcal G}}
\newcommand{\calL}{{\mathcal L}}
\newcommand{\cM}{{\mathcal M}}
\newcommand{\cI}{{\mathcal I}}
\newcommand{\ati}{{{\mathcal D}_{\cE}}}
\newcommand{\Image}{\operatorname{Im}}
\newcommand{\Kernel}{\operatorname{Ker}}
\newcommand{\gr}{\operatorname{gr}}
\newcommand{\rk}{\operatorname{rk}}
\newcommand{\C}{{\mathbb C}}
\newcommand{\A}{{\mathscr  A}}
\newcommand{\qee}{\mbox{\hspace{0.2mm}}\hfill$\triangle$}
\newcommand{\Der}{\operatorname{Der}}
\newcommand{\iso}{\stackrel{\sim}{\to}}
\newcommand{\cEnd}{{\mathcal E}nd}
\newcommand{\ob}{\text{\bf ob}}
\newcommand{\Out}{\operatorname{Out}}
\newcommand{\ad}{\text{ad}}
\begin{document}
 \begin{flushright}
 SISSA  Preprint 06/2014/mate \\
 \end{flushright}
\bigskip\bigskip

\begin{center}
	{\large \bf NONABELIAN HOLOMORPHIC  LIE\\[5pt] ALGEBROID EXTENSIONS } \\[30pt]
{\sc Ugo Bruzzo,$^{\S\P}$ Igor Mencattini$^\ddag$,\\ Vladimir N. Rubtsov$^{\star\sharp}$
and Pietro Tortella$^\ddag$} \\[10pt]
\small
$^\S$ Scuola Internazionale Superiore di Studi Avanzati (SISSA),  \\ Via Bonomea 265, 34136 Trieste, Italy  \\[3pt]
$^\P$ Istituto Nazionale di Fisica Nucleare (INFN), Sezione di Trieste \\[3pt]
$^\ddag$ Instituto de Ci\^encias Matem\'aticas e de Computa\c c\~ao - USP, \\
Avenida Trabalhador S\~ao-Carlense,  400 - Centro \\
CEP: 13566-590 - S\~ao Carlos - SP, Brazil  \\[3pt]
$^\star$ Universit\'e d'Angers, D\'epartement de Math\'ematiques,  \\ UFR Sciences, LAREMA,  UMR 6093 du CNRS, \\
2 bd.~Lavoisier, 49045 Angers Cedex 01, France \\[3pt]
$^\sharp$ ITEP Theoretical Division, 25 Bol.~Tcheremushkinskaya,  \\ 117259, Moscow, Russia\\[10pt]
E-mail: {\tt bruzzo@sissa.it}, {\tt igorre@icmc.usp.br}, \\ {\tt Volodya.Roubtsov@univ-angers.fr}, {\tt Pietro.Tortella@icmc.usp.br}
\end{center} 
\vfill
\par
\noindent
\begin{quote}
\small  {\sc Abstract.}   We classify nonabelian extensions of Lie algebroids in the holomorphic category. Moreover we study a spectral sequence associated to any such extension. This spectral sequence generalizes the Hochschild-Serre spectral sequence for Lie algebras to the holomorphic Lie algebroid setting.  As an application, we show that the hypercohomology of the Atiyah algebroid of a line bundle has a natural Hodge structure.
\end{quote}
\vfill
\noindent\parbox{.75\textwidth}{\hrulefill}\par
\noindent\begin{minipage}[c]{\textwidth} \footnotesize 
 \noindent   
Date: Revised 20 November 2014  \\
{\em 2000 Mathematics Subject Classification:} 14F05, 14F40, 32L10,   55N25, 55N91, 55R20\par\noindent
The authors gratefully acknowledge financial support and hospitality during  visits to
Universit\'e d'Angers, {\sc sissa} and {\sc usp}  S\~ao Carlos. Support for this work was provided by {\sc prin} ``Geometria delle variet\`a algebriche e dei loro spazi di moduli,''    {\sc in}d{\sc am-gnsaga,} the {\sc infn} project {\sc pi}{\tiny 14} ``Nonperturbative dynamics of gauge theories",   the {\sc geanpyl-ii}  Angers-{\sc sissa} project
``Generalized Lie algebroid strucures,'' and  {\sc    diadem} and by {\sc fapesp} (S\~ao Paulo State Research Foundation)
through  the grants  2011/17593-9 and 2012/07867-7.
\end{minipage}
\newpage

\setcounter{tocdepth}{1}
{\small\tableofcontents}

\section{Introduction} 

In this paper we study   nonabelian extensions of Lie algebroids in the holomorphic category. The same theory applies to Lie algebroids over schemes over a field of characteristic zero. Given a Lie algebroid $\cB$ and a totally intransitive Lie algebroid $\calL$ on a complex manifold $X$, we prove the existence of a cohomology class which obstructs the extension of $\cB$ by $\calL$. We also show that the set of isomorphism classes of   extensions is a torsor over a suitable cohomology group. 

The space of global sections of a Lie algebroid $\cA$ over a complex manifold $X$ has a natural structure of Lie-Rinehart algebra.
If the manifold $X$ is Stein, the  Lie algebroid $\cA$ can be reconstructed from the space of its global sections with its Lie-Rinehart algebra structure.   Thus, locally, the extension problem for Lie algebroids reduces to the extension problem for Lie-Rinehart algebras, which, albeit in a different language, was studied in \cite{mackenzie}.

So, our analysis of the extension problem for  holomorphic Lie algebroid starts  in Section \ref{extensions} of this paper  by studying in some detail the extension problem for Lie-Rinehart algebras. As soon as we move to the global problem, we need to use hypercohomology and the theory of derived functors. 
For this reason, and  for the reader's convenience, we basically rework the theory from scratch, instead of relying too much on the existing literature. Both the definition of the obstruction class, and the classification of the extensions, are obtained by constructing explicit \v Cech cocyles.

In some sense, the theory of extensions of Lie algebroids, as we approach it, is one more step in the development of the
 theory of nonabelian extensions, which was initiated by O.\ Schreier almost a century ago, with the theory of nonabelian group extensions \cite{Schr1,Schr2}. The theory was reworked and recast in a simpler and more modern form by Eilenberg and MacLane in the 1940s \cite{EML1,EML2}. The same kind of machinery solves the nonabelian Lie algebra extension problem, see \cite{Hoch,Mori,Shuk,mackenzie}; a nice rendering of the theory,
with an explicit construction of all the cocycles involved, is given in \cite{TheThreeGuys}. The theory extends to Lie algebroids along the same lines.

A generalization of our result about the nonabelian extensions of Lie algebroids to Courant algebroids was given in \cite{Davide}.

In a second part of the paper, given an exact sequence of Lie algebroids $$ 0\to \calL \to \cA \to \cB\to 0,$$  we note that
the de Rham complex $\Omega_\cA^\bullet$ of $\cA$ has a natural filtration. This induces 
 a spectral sequence   that converges to the hypercohomology $\mathbb H(X,\Omega_\cA^\bullet)$.  This spectral sequence generalizes the one defined by Hochschild and Serre for any pair $(\mathfrak g,\mathfrak h)$, where $\mathfrak g$ is a Lie algebra and $\mathfrak h$ is a Lie subalgebra of $\mathfrak g$. 
We give a general form for the $E_1$ term and, by means of an explicit computation,   find a description of the $d_1$ differential. 
Some of these results for the algebraic setting (i.e., for Lie-Rinehart algebras) can be found in   \cite{Rub-thesis,Roub80}.

In Section \ref{linebundles} we study in some detail the spectral sequence in the case of the Atiyah algebroid of a line bundle $\mathcal M$. We give a very explicit realization of the differential $d_1$, and show that when $X$ is compact K\"ahler, as a consequence of the Hodge decomposition, the spectral sequence degenerates at the second step (in particular, the differential $d_2$ is zero). This also allows one to equip the hypercohomology of the Atiyah algebroid of $\mathcal M$ with a  Hodge structure.

{\bf Acknowledgements.} We thank Paul Bressler, Tony Pantev and Jean-Claude Thomas  for useful discussions. We acknowledge hospitality and support by the Max-Planck-Institut f\"ur Mathematik in Bonn, where part of this work was carried out.
 
\bigskip\section{Generalities on Lie algebroids }\label{gen}
     
Let $X$ be a complex manifold.\footnote{Let us stress once more that, while  in this paper we  work only in the holomorphic category, the techniques used here can be also applied to  the algebraic one, i.e., when $X$ is a smooth scheme over an algebraically closed field of characteristic $0$, and $\cA$, $\calL$ and $\cB$ are algebraic Lie algebroids.} We shall denote by $\cO_X$ the sheaf of holomorphic functions on $X$ and by $\mathbb C_X$ the constant sheaf on $X$ with fibre $\mathbb C$; with $\Theta_X$ we shall denote the tangent sheaf of $X$.  
By ``vector bundle'' we shall mean a finitely generated locally free $\cO_X$-module.

A holomorphic Lie algebroid $\cA$ on $X$ is a coherent $\cO_X$-module $\cA$ equipped with a Lie algebroid structure, that is, a morphism of $\cO_X$-modules $a\colon \cA \to\Theta_X$, called the {\em anchor} of $\cA$, and a Lie bracket defined on the sections of $ \cA$     satisfying the  Leibniz rule
\begin{equation}\label{leibniz} [s,ft] = f[s,t] +a(s)(f)\,t \end{equation}
for all sections $s,t$ of $\cA$ and $f$ of $\cO_X$.
This condition implies that the anchor is a morphism of sheaves of $\mathbb C_X$-Lie algebras, considering $\Theta_X$  as a sheaf of $\mathbb C_X$-Lie algebras with respect to the commutator of vector fields. 

A morphism $(\cA,a)\to (\cA',a')$ of Lie algebroids defined over the same manifold $X$ is a morphism of $\cO_X$-modules $f\colon\cA\to\cA'$, which is compatible with the brackets defined in $\cA$ and in $\cA'$, and such that $a'\circ f=a$. 

Let us introduce the {\em de Rham complex}  of $\cA$, which is a  sheaf of
differential graded algebras. This is  $\Omega_{ \cA}^\bullet=\Lambda^\bullet_{\cO_X}\cA^\ast$ as a sheaf of $\cO_X$-modules, with a product  given by the wedge product $\bullet \wedge \bullet$, and   differential  $d_\cA\colon \Omega_{ \cA}^\bullet\to \Omega_{ \cA}^{\bullet+1}$   defined by the  formula
  \begin{eqnarray*}\label{diff}
(d_\cA\xi)(s_1,\dots,s_{p+1}) &=& 
\sum_{i=1}^{p+1}(-1)^{i-1}a(s_i)(\xi(s_1,\dots,\hat s_i,
\dots,s_{p+1})) \\ & + & \sum_{i<j}(-1)^{i+j}
\xi([s_i,s_j],\dots,\hat s_i,\dots,\hat s_j,\dots,s_{p+1})
\end{eqnarray*}
  for   $s_1,\dots,s_{p+1}$ sections of $\cA$, and $\xi$ a section of $\Omega_{ \cA}^p$.
  The hypercohomology of the complex $(\Omega_{ \A}^\bullet,d_\cA)$ is called the {\em holomorphic Lie algebroid cohomology} of $\cA$.
When $\cA$ is locally free as an $\cO_X$-module, this cohomology is  isomorphic to the Lie algebroid cohomology of the smooth complex Lie algebroid $A$ obtained by matching (in the sense of \cite{Lu97,Mokri,GSX}) the holomorphic Lie algebroid $\cA$ with the anti-holomorphic tangent bundle $T_X^{0,1}$ \cite{GSX,BR-cohom}. Since $\rk A = \rk  \A + \dim X$,  the hypercohomology of the complex $\Omega_{ \cA}^\bullet$ vanishes in degree higher than $\rk  \A + \dim X$. 
        
\begin{ex}[The Atiyah algebroid]
A fundamental example of Lie algebroid is $\ati$, the {\em Atiyah algebroid} associated with a coherent $\cO_X$-module $\cE$.
This is defined as the sheaf of first order differential operators   on $\cE$   having scalar symbol. $\ati$ sits inside
the   exact sequence of sheaves of $\cO_X$-modules
\begin{equation} \label{atiyah} 0 \to  \cEnd_{\mathcal O_X}(\cE) \to \ati \xrightarrow{\sigma } \Theta_X ,
\end{equation} where $\sigma$, called {\it the symbol map}, plays the role of the anchor. The bracket is given by the commutator of differential operators.
Note that, when $\cE$ is locally free, the anchor is surjective and $\ati$ is an extension of $\Theta_X$ by $\cEnd_{\cO_X} (\cE)$.
\end{ex}

\begin{defin}
Given $\cM$ a coherent $\cO_X$-module, a {\em $\cA$-connection} on $\cM$ is a morphism of $\cO_X$-modules $\alpha: \cA \to \mathcal{D}_{\!\!\cM}$ which commutes with the anchors of $\cA$ and $ \mathcal{D}_{\!\!\cM}$.
Moreover, an  $\cA$-connection $\alpha$ is said to be {\em flat}, and the pair $(\cM,\alpha)$ is called a {\em representation} of $\cA$ (also called an $\cA$-module), if $\alpha$ is a morphism of Lie algebroids. 
\end{defin}

Given a $\cA$-connection $(\cM,\alpha)$, one can introduce the twisted modules $\Omega_{ \cA}^\bullet(\cM)$, where $\Omega_{\cA}^k(\cM) = \Lambda^k\cA^\ast\otimes \cM$, with a differential

 \begin{eqnarray*}\label{diff}
(d_\alpha \xi)(s_1,\dots,s_{p+1}) &=& 
\sum_{i=1}^{p+1}(-1)^{i-1}\alpha (s_i)(\xi(s_1,\dots,\hat s_i,
\dots,s_{p+1})) \\ & + & \sum_{i<j}(-1)^{i+j}
\xi([s_i,s_j],\dots,\hat s_i,\dots,\hat s_j,\dots,s_{p+1}).
\end{eqnarray*}
We shall call the sections of this twisted module   $\cM$-valued $\cA$-forms.

If the connection is flat, $\Omega_\cA^\bullet(\cM)$ is a complex of $\cO_X$-modules, and its hypercohomology is called the cohomology of $\cA$ with values in $(\cM,\alpha)$.
If $\alpha$ is not flat, one can introduce its {\em curvature} $F_\alpha$ as the defect of $\alpha$ to be a representation, i.e.,
\[
F_\alpha(s_1,s_2) = [\alpha(s_1),\alpha(s_2)] - \alpha([s_1,s_2]).
\]
Observe that $F_\alpha$ is $\cO_X$-bilinear and   satisfies the {\it Bianchi identity} $d_\alpha F_\alpha= 0$. The curvature measures also the defect of $(\Omega^\bullet_\cA(\cM), d_\cA)$ to be a complex. In fact one can check that $d_\cA^2 = F_\alpha \smile \bullet$, where for $\xi \in \Omega^p_\cA(\cM)$:    

\begin{equation} \label{curvature_1}
\big(F_\alpha \smile \xi \big) \  (s_1,\ldots, s_{p+2}) = \sum_{i<j} (-1)^{i+j} F_\alpha(s_i,s_j) \big( \xi(s_1,\ldots,\hat{s}_i,\ldots,\hat{s}_j,\ldots, s_{p+2}) \big).
\end{equation}
Note that $F_\alpha \in \Omega_\cA^2(\End_{\cO_X}(\cM))$, and the last equation is given by the cup product coming from the     evaluation morphism  $\cEnd_{\cO_X}(\cM)\otimes_{\cO_X}\cM\to\cM$.

\subsection{Totally intransitive Lie algebroids}

In this subsection we shall collect a certain number of definitions that will be used in what follows.

\begin{defin}
A Lie algebroid $\calL$ whose anchor is zero is said {\em totally intransitive}. 
\end{defin}

\begin{remark} The notion of totally intransitive Lie algebroid is equivalent to that of a sheaf of $\cO_X$-Lie algebras.  The structure of  an $\cO_X$-Lie algebra sheaf  induces a $\mathbb C$-Lie algebra structure on $\calL(x)$, the fibre of $\calL$ at $x\in X$, which, in general, may vary from point to point. When this does not happen, i.e., when there exists a $\mathbb C$-Lie algebra $\mathfrak g$ and local trivializations $\mathcal L_{|U} \iso \cO_U \otimes \mathfrak g$ that are isomorphisms of Lie algebras, $\calL$ is called a {\em Lie algebra bundle}, cf.\ \cite{mackenzie}.
\end{remark}

Let $\calL$ be a  totally intransitive Lie algebroid.

\begin{defin}
$\Der_{\cO_X} (\calL)$ is the the sheaf of the sections $\phi\in \cEnd_{\cO_X}(\calL)$ such that
\begin{equation} \label{derivations}
\phi([l,l']) = [\phi(l), l'] + [l,\phi(l')]. 
\end{equation}
for any $l,l' \in \calL$. 
\end{defin}

Let $\ad\colon \calL \to \Der_{\cO_X}(\calL)$ be the morphism of $\cO_X$-modules   that with every $l\in \calL$ associates $\ad_l: l'\mapsto [l,l']$. 
\begin{defin}
 We define
\[
\Out_{\cO_X}(\calL)= \Der_{\cO_X}(\calL)/\ad(\calL).
\] 
\end{defin}

Note that, since $\ad(\calL)$ is an ideal in $\Der_{\cO_X}(\calL)$, the sheaf  $\Out_{\cO_X}(\calL)$    is a totally intransitive Lie algebroid. Moreover, as $\calL$ and its Atiyah algebroid $\calD_{\!\! \calL}$ are $\cO_X$-modules,
we can define:
\begin{defin}
$\Der_{\calD}(\calL)$ is the sheaf of the sections of $\calD_{\!\!\calL}$ that satisfy the equation \eqref{derivations}. 
\end{defin}

Note   that  $\Der_{\calD}(\calL)$ is a sub-Lie algebroid of $\calD_{\!\!\calL}$, and
the natural inclusion $\Der_{\cO_X}(\calL) \hookrightarrow \Der_{\calD}(\calL)$ induces an inclusion $\ad(\calL) \hookrightarrow \Der_{\calD}(\calL)$. Under this map $\ad(\calL)$ is an ideal in $\Der_{\calD}(\calL)$.

Finally, we define:

\begin{defin}
$\Out_{\calD}(\calL)$ is the quotient of $\Der_{\calD}(\calL)$ by $\ad(\calL)$.
\end{defin}

Note that $\Out_{\calD}(\calL)$ has a natural structure of Lie algebroid.
In general,
for a Lie algebroid $\cA$ with a nontrivial anchor,    $\Der_{\cO_X}(\cA)$, $\Der_{\calD}(\cA)$, and $\ad(\cA)$ are badly behaved. In fact, because of the Leibniz identity, $\Der_{\cO_X}(\cA)$ and $\Der_{\calD}(\cA)$) are not     sub-$\cO_X$-modules of the corresponding vector bundles, and the adjoint map does not yield a representation of $\cA$ on itself (cf.\  \cite{up_to_homotopy}).

\begin{remark} \label{bracket}
In the next sections we shall be dealing with a Lie algebroid $\cA$ and a totally intransitive Lie algebroid $\calL$ endowed with a $\cA$-connection $\alpha: \cA \rightarrow \Der_{\calD}(\calL)$. 
We shall also consider $\Omega_{\cA}^\bullet (\calL)$, the sheaf of $\cO_X$-modules of   $\calL$-valued $\cA$-forms. The bracket on $\calL$ induces a bracket $[\cdot,\cdot]: \Omega_{\cA}^p(\calL) \otimes \Omega^q_{\cA}(\calL) \to \Omega^{p+q}_\cA(\calL)$ by means of a cup-product construction. The explicit formula is
\[
[\xi,\eta](b_1,\ldots,b_{p+q}) =\sum_{\sigma\in \Sigma_{p,q}} (-1)^\sigma [\xi(b_{\sigma(1)},\ldots,b_{\sigma(p)}) , \eta(b_{\sigma(p+1)},\ldots,b_{\sigma(p+q)}) ]\ ,
\]
where $\Sigma_{p,q}$ denotes the set of $(p,q)$-shuffles, i.e., the set of permutations $\sigma \in \Sigma_{p+q}$ such that $\sigma(1)<\sigma(2)<\cdots <\sigma(p)$ and $\sigma(p+1)<\sigma(p+2)<\cdots< \sigma(p+q)$. 
This bracket is graded skew-symmetric, i.e.,
\[
[\xi,\eta] = (-1)^{pq+1} [\eta,\xi] \quad\mbox{for all}\quad \xi\in\Omega^p_{\cA}(\calL)\quad\text{and}\quad\eta\in\Omega^q_{\cA}(\calL),
\]
and   satisfies a graded Jacobi identity. 
\end{remark}

\bigskip 

\section{Extensions of Lie algebroids}\label{extensions}

In this section we compute the obstruction to the existence of an extension of a holomorphic Lie algebroid by a totally intransitive Lie algebroid, and     classify such extensions when the obstruction vanishes. 
In the smooth category, this problem has been extensively treated in the literature. For a comprehensive discussion we refer the reader to \cite{mackenzie}, see also \cite{TheThreeGuys,BKTV09,brahic}.
Moreover,  the problem of the abelian extensions of   holomorphic Lie algebroids was solved in \cite{BB} for $\cB= \Theta_X$, and in \cite{PietroCEJM}   for a general $\cB$ (with the notation of equation \eqref{ext}).

\subsection{The general problem}

Let
\begin{equation} \label{ext}  
0 \to \calL \to \cA \to \cB \to 0. 
\end{equation}
be an extension of Lie algebroids, i.e., $\calL, \cA$ and $\cB$ are holomorphic Lie algebroids over a complex manifold $X$, the map $\calL\to \cA$ is injective, the map $\cA \to \cB$ is surjective and their composition is zero.
It follows from the definition of Lie algebroid morphism that the anchor of $\calL$ is zero, i.e.,  $\calL$ is a totally intransitive Lie algebroid.

\begin{assumption}
In what follows, unless differently stated, $\cB$ will be a locally free $\cO_X$-module.
\end{assumption}

Let an extension as \eqref{ext} be given. Then   one defines a morphism of Lie algebroids
\begin{equation} \label{alpha} 
\bar\alpha \colon \cB \to \Out_\calD(\calL) 
\end{equation}
 by letting 
\[
\bar\alpha (s) (\ell) = [s',\ell]_{\cA},
\]
with $s'$ any pre-image of $s$ in $\cA$.
The morphism $ \bar\alpha \colon \cB \to \Out_\calD(\calL) $ defines a representation of $\cB$ on the centre $Z(\calL)$ of $\calL$, so that one can consider the hypercohomology
$\mathbb H \big( X;\Omega^\bullet_{\cB} \big( Z(\calL)),\bar\alpha\big)$.
Note that the action of $\cB$ on the centre $Z(\calL)$ is defined once a morphism $\bar\alpha$ has been chosen. Thus  both the $\cB$-module structure on $Z(\calL)$ and the corresponding (hyper)cohomology groups depend on this choice. In spite of this,  we       adopt a notation that does not stress this dependence. For example, to denote the hypercohomology groups above mentioned, we shall prefer the simpler notation $\mathbb H \big(X;\Omega^\bullet_{\cB} \big( Z(\calL)\big)\big)$ to the more precise   $\mathbb H \big( X;\Omega^\bullet_{\cB} \big( Z(\calL)),\bar{\alpha}\big)$.

We can reformulate the extension problem more precisely as follows:
{\em given two Lie algebroids $\cB$ and $\calL$ on $X$, with $\calL$ totally intransitive, and given 
\[
\bar\alpha \colon \cB \to \Out_\calD(\calL)\]
as above,
\begin{enumerate}
\item does there exist an extension as in \eqref{ext} inducing $\bar\alpha$? 
\item If such extensions exist, how are they classified?
\end{enumerate}
}

Some notational remarks are now in order.
\begin{enumerate}
\item The pair $(\calL,\bar{\alpha})$ is sometimes called a coupling of $\cB$, see \cite{mackenzie}.
\item Two extensions $\cA$, $\cA'$ of $\cB$ by $\calL$ are called {\it equivalent} if there exists a Lie algebroid morphism $\cA\to\A'$ that makes the following diagram commutative:
\[
\xymatrix{
0 \ar[r]  & \calL \ar[r] \ar@{=}[d] & \cA \ar[r] \ar[d] & \cB \ar[r] \ar@{=}[d] & 0 \\
0 \ar[r] & \calL \ar[r] & \cA'\ar[r] & \cB \ar[r] & 0\ .
}
\]
\item Given a complex of sheaves $\mathcal F^\bullet$ indexed by $\mathbb{Z}$ and given $a \in \mathbb{Z}$, we denote by $\tau^{\geq a} \mathcal F^\bullet$ the truncated complex, obtained by replacing the sheaves $\mathcal F^b$ by $0$ for $b<a$, i.e.,
\[
\tau^{\geq a} \mathcal{F} = \cdots \to 0 \to 0 \to \mathcal{F}^a \to \mathcal F^{a+1} \to \cdots 
\]

\end{enumerate}

The main result of this section is contained in the following theorem.

\begin{thm}\label{extthm}
 There is a cohomology class $\ob(\bar\alpha) \in \mathbb H^3 \big(X ; \tau^{\geq 1} \Omega^\bullet_{\cB} ( Z(\calL)) \big)$
such that:
\begin{enumerate}
\item there exists  a (holomorphic) Lie algebroid extension of $\cB$ by $\calL$ inducing the map $\bar{\alpha}$ if and only if $\ob(\bar\alpha)  = 0$.
\item When $\ob(\bar\alpha)  = 0$, the set of isomorphism classes of extensions of $\cB$ by  $\calL$ inducing $\bar\alpha$ is a torsor over  $\mathbb H^2 \big(X; \tau^{\geq 1} \Omega^\bullet_{\cB} ( Z(\calL)) \big)$.
\end{enumerate}
\end{thm}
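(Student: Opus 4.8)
The plan is to reduce the global statement to a \v{C}ech computation and to match the resulting cocycle conditions with a hypercohomology group of the truncated complex $\tau^{\geq 1}\Omega^\bullet_{\cB}(Z(\calL))$. First I would fix a good open cover $\{U_i\}$ of $X$ such that on each $U_i$ the locally free sheaf $\cB$ admits a splitting, i.e.\ an $\cO_{U_i}$-linear section $\beta_i\colon \cB_{|U_i}\to \cA_{|U_i}$ of a putative extension; equivalently, I would build the candidate extension $\cA$ directly from a local splitting of $\bar\alpha$. On each $U_i$, lifting $\bar\alpha$ to an honest $\cB$-connection $\alpha_i\colon \cB_{|U_i}\to \Der_\calD(\calL)_{|U_i}$ (possible since $\cB$ is locally free and the map $\Der_\calD(\calL)\to\Out_\calD(\calL)$ is surjective) and choosing a bracket-correction term $\rho_i\in\Omega^2_{\cB}(\calL)_{|U_i}$, one writes down the candidate Lie algebroid structure on $\cO_{U_i}\otimes\calL\oplus\cB$ in the standard way (anchor from $\cB$, bracket twisted by $\alpha_i$ and $\rho_i$). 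The Jacobi identity for this local bracket is equivalent to an equation of the shape $d_{\alpha_i}\rho_i = \tfrac12[\rho_i,\rho_i]$ together with the statement that $F_{\alpha_i}=\mathrm{ad}\circ\rho_i$, exactly as in the Lie-Rinehart / Lie algebra case treated in Section \ref{extensions}; I would invoke that local analysis rather than redo it.

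\textbf{Assembling the global cocycle.} The core of the argument is then to compare the local data $(\alpha_i,\rho_i)$ on overlaps. On $U_{ij}$ the difference $\alpha_i-\alpha_j$ takes values in $\mathrm{ad}(\calL)$ since both induce $\bar\alpha$, so one can write $\alpha_i-\alpha_j=\mathrm{ad}\circ\pi_{ij}$ with $\pi_{ij}\in\Omega^1_{\cB}(\calL)_{|U_{ij}}$, well defined modulo $\Omega^1_{\cB}(Z(\calL))$. Plugging into the bracket-correction equations, $\rho_i-\rho_j-d_{\alpha_j}\pi_{ij}-\tfrac12[\pi_{ij},\pi_{ij}]$ turns out to be central, i.e.\ lands in $\Omega^2_{\cB}(Z(\calL))_{|U_{ij}}$; call it $c_{ij}$. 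On triple overlaps $U_{ijk}$ the two natural ways of comparing $\alpha_i$ and $\alpha_k$ differ by $\mathrm{ad}$ of an element $z_{ijk}\in\Omega^1_{\cB}(Z(\calL))_{|U_{ijk}}$ (namely $z_{ijk}=\pi_{ij}+\pi_{jk}-\pi_{ik}$), and the combination $(z_{ijk},\ (\delta c)_{ijkl},\ \ldots)$ — more precisely the pair consisting of $\{d_{\bar\alpha}z_{ijk}-(\delta c)_{ijk}\}$ in degree $(1,2)$ and $\{\delta z\}_{ijkl}$ in degree $(0,3)$ — assembles into a \v{C}ech–de Rham cocycle for the double complex computing $\mathbb H^\bullet\big(X;\tau^{\geq 1}\Omega^\bullet_{\cB}(Z(\calL))\big)$ in total degree $3$. (The truncation $\tau^{\geq 1}$ appears because the corrections $\pi_{ij}$, $c_{ij}$, $z_{ijk}$ always live in positive $\Omega^\bullet_{\cB}$-degree: there is no degree-$0$ piece, as an extension restricted to a point carries no scalar data.) I would then check that changing the choices $(\alpha_i,\rho_i,\pi_{ij})$ alters this cocycle by a coboundary, so the class $\ob(\bar\alpha)$ is well defined. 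For part (i): if $\ob(\bar\alpha)=0$ the cocycle is a coboundary, which after unwinding lets one modify the $\rho_i$ and $\pi_{ij}$ so that the $c_{ij}$ vanish and the local Lie algebroids $\cA_{|U_i}$ glue via transition isomorphisms $\id+\mathrm{ad}\circ\pi_{ij}$ into a global extension inducing $\bar\alpha$; conversely a global extension furnishes compatible local splittings with vanishing obstruction.

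\textbf{The torsor statement.} For part (ii), assuming $\ob(\bar\alpha)=0$, I would fix one extension $\cA_0$ as basepoint and show that, given any other extension $\cA$ inducing $\bar\alpha$, the differences of local splitting data define a $2$-cocycle for the same double complex, now a genuine cocycle (no degree-$3$ part, since both satisfy Jacobi), valued in $\tau^{\geq 1}\Omega^\bullet_{\cB}(Z(\calL))$, whose class in $\mathbb H^2$ depends only on the equivalence class of $\cA$. The map $\cA\mapsto[\cA]-[\cA_0]$ is injective (two extensions with the same class admit an equivalence, built again from a \v{C}ech $1$-cochain realizing the coboundary) and surjective (any class can be added to $\cA_0$ by twisting its bracket by a representative $2$-cocycle, which preserves the Jacobi identity precisely because the correction is central and closed). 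Hence the set of isomorphism classes is a torsor over $\mathbb H^2\big(X;\tau^{\geq 1}\Omega^\bullet_{\cB}(Z(\calL))\big)$.

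\textbf{Main obstacle.} The routine-but-delicate part is bookkeeping the signs and shuffle factors so that the \v{C}ech–de Rham assembly really produces cocycles for $\tau^{\geq 1}\Omega^\bullet_{\cB}(Z(\calL))$ and not for the untruncated complex — in particular verifying that no spurious degree-$0$ contribution ever arises and that the differential $d_{\bar\alpha}$ on the centre (which depends on the chosen $\bar\alpha$) is the correct one. The genuinely substantive step, as opposed to bookkeeping, is establishing that the various obstruction-to-gluing terms are \emph{central}, i.e.\ lie in $Z(\calL)$-valued forms: this is what makes $\bar\alpha$ (rather than a lift $\alpha$) suffice to define the coefficient module, and it is the holomorphic-sheaf analogue of the classical fact in nonabelian Lie algebra extension theory. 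I expect to handle it exactly as in the Lie-Rinehart computation already carried out in this section, transported to \v{C}ech cochains.
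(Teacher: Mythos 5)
Your overall route is the same as the paper's: local lifting pairs $(\alpha_i,\rho_i)$ over a Stein cover, comparison data $\pi_{ij}$ with $\alpha_i-\alpha_j=\ad_{\pi_{ij}}$ on overlaps, the observation that $c_{ij}=\rho_i-\rho_j-d_{\alpha_j}\pi_{ij}-\tfrac12[\pi_{ij},\pi_{ij}]$ and $z_{ijk}=\pi_{ij}+\pi_{jk}-\pi_{ik}$ are central, a class in $\mathbb H^3\big(X;\tau^{\geq 1}\Omega^\bullet_\cB(Z(\calL))\big)$, and the torsor structure by twisting with central $(\{\gamma_i\},\{\psi_{ij}\})$. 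However, the assembly of the obstruction cocycle is wrong as written. The object you propose, namely $\{d_{\bar\alpha}z_{ijk}-(\delta c)_{ijk}\}$ together with $\{(\delta z)_{ijkl}\}$, (a) has inconsistent bidegrees (both components sit in total degree $4$ of the \v{C}ech--de Rham double complex, not $3$), and (b) vanishes identically: $\delta z=\pm\delta^2\pi=0$, and $d_{\bar\alpha}z_{ijk}=(\delta c)_{ijk}$ is an identity satisfied by \emph{any} choice of lifting data (it is exactly condition (iii) in the paper's Proposition \ref{cocycle}). So your candidate class is always zero and cannot obstruct anything. The genuine total-degree-$3$ cocycle is the triple $(\{\lambda_i\},\{c_{ij}\},\{z_{ijk}\})$ with the extra \v{C}ech-degree-$0$ component $\lambda_i=d_{\alpha_i}\rho_i\in\Omega^3_\cB(Z(\calL))(U_i)$; the two expressions you wrote down are precisely (part of) the statement that this triple is closed under the total differential, not the cocycle itself.

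The missing component $\lambda_i$ is not a bookkeeping detail and cannot be normalized away: making the local bracket on $(\cB\oplus\calL)_{|U_i}$ satisfy the Jacobi identity requires solving $d_{\alpha_i}(\rho_i-w_i)=0$ with $w_i$ central, which is obstructed by the class of $\lambda_i$ in $H^3\big(\cB(U_i);Z(\calL)(U_i)\big)$; this group has no reason to vanish on a Stein open set (already for Lie algebras over a point the obstruction in $H^3(\mathfrak g;Z(\mathfrak l))$ can be nonzero). The paper's construction avoids assuming local solvability precisely by carrying $\lambda_i$ as the $(0,3)$ component. Correspondingly, in your converse direction of (i) the coboundary data $(\{a_i\},\{m_{ij}\})$ must be used to kill \emph{all three} components (local Jacobi, the double-overlap compatibility $c_{ij}$, and the triple-overlap condition $z_{ijk}$), not only the $c_{ij}$; and the gluing maps are of the form $(b,l)\mapsto(b,\,l+\pi_{ij}(b)-m_{ij}(b))$, not $\id+\ad\circ\pi_{ij}$. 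With the cocycle corrected in this way, your argument for part (ii) matches the paper's.
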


To simplify the exposition, we shall split the proof of the Theorem \ref{extthm} into two subsections. In the first  we shall assume that $X$ is a Stein manifold, while in the second   $X$ will be any complex manifold.

\subsection{The local case} 
\label{LRalgebras}

If $X$ is a Stein manifold, instead of studying   the extensions of $\cB$ by $\calL$ (two coherent $\cO_X$-modules), we can study the problem of the extensions of $B$ by $L$ (the spaces of   global sections of $\cB$ and  $\calL$).
Indeed, on a Stein manifold any coherent $\cO_X$-module is generated by its global sections, and  thus the problem of the extensions of coherent $\cO_X$-modules reduces to a purely algebraic one. Since in this context the terminology of the Lie-Rinehart algebras is commonly used, we shall start recalling the relevant definitions for this class of algebras; see also \cite{LR} for some history about this subject. The results presented in this subsection are an adaptation to the Lie-Rinehart terminology of the results about the extensions of   smooth real Lie algebroids, for which we refer the reader to   Chapter 7 of \cite{mackenzie}. 

\begin{defin}
Let $R$ be a unital commutative algebra over a field $k$. A {\em $(k,R)$-Lie-Rinehart algebra} is a pair $(B,b)$, where $B$ is a $k$-Lie algebra and $b\colon B \to \operatorname{Der}_k(R)$ a representation of $B$ in $\operatorname{Der}_k(R)$ that satisfies the Leibniz identity \eqref{leibniz}.
\end{defin}

\begin{remark}  
(i) Let $M$ be a smooth real manifold, and   $E$ a Lie algebroid   on it. The space of global sections $\Gamma(E)$ of $E$ is a $(\mathbb R, C^\infty(M))$-Lie-Rinehart algebra. Viceversa, any $(\mathbb R, C^\infty(M))$-Lie-Rinehart algebra, which is projective as a $C^\infty(M)$-module, is the space of sections of a smooth Lie algebroid.

(ii)  Given a holomorphic Lie algebroid $\cA$   over $X$ and   an open subset $U\subseteq X$, the vector space $\cA(U)$ of   sections of $\cA$ over $U$ has a natural structure of   $\big(\mathbb{C},\cO_X(U)\big)$-Lie-Rinehart algebra.
\end{remark}

As for Lie algebroids,   for   Lie-Rinehart algebras it is possible to define a cohomology theory, which reduces to the Lie algebra Chevalley-Eilenberg cohomology when $b=0$. In the same vein, all   notions introduced in Section 2 have an analogue in the theory of the Lie-Rinehart algebras. For example, given an $R$-module $M$, it is possible to define the corresponding notion of Atiyah algebra $\mathcal D_M$,  of   $B$-connection, and of  representation of $B$. Moreover, given a $B$-connection $(M,\alpha)$, one can   introduce the twisted modules $\Omega_B^p(M)$ and the corresponding differential $d_\alpha$. Finally, if $M$ is a representation of $B$, we shall denote by $H^k(B;M)$ the $k$th cohomology of the complex $\Omega_B^\bullet(M)$. 
      
Before   stating the main result of this subsection we introduce the following notion. 

\begin{defin}\label{def:liftpair}  
A {\em lifting pair} of the coupling $(L,\bar\alpha)$ is the data of a pair $(\alpha,\rho)$, where:
\begin{enumerate}
\item $\alpha: B \to \Der_\calD(L)$ is a lifting of $\bar{\alpha}$ and
\item $\rho \in \Omega^2_B(L)$ satisfies $\ad_\rho = F_\alpha$.
\end{enumerate}
\end{defin}

Note that lifting pairs always exist: any lifting $\alpha: B \rightarrow \Der_\calD L$ of $\bar{\alpha}$ is a $B$-connection on $L$, and since $\bar \alpha$ is a morphism of Lie algebroids, there exists $\rho \in \Omega^2_B(L)$ such that $F_\alpha(b_1,b_2) = \ad_{\rho(b_1,b_2)}$.

\begin{thm} \label{thm_local}
Let $R$ be a unital commutative $k$-algebra, and   $(B,b)$   a $(k,R)$-Lie-Rinehart algebra, where $B$ is a projective $R$-module. Let $(\bar{\alpha},L)$ be a coupling of $B$, where $L$ is a $R$-Lie-algebra, and $\bar{\alpha}:B\to \Out_\calD L$ a morphism of Lie-Rinehart algebras.
\begin{enumerate}
\item There exists a class $\ob(\bar{\alpha}) \in H^3(B;Z(L))$ such that $\ob(\bar{\alpha}) = 0$ if and only if there exists an extension of $B$ by $L$ inducing $\bar{\alpha}$;
\item  when the class $\ob(\bar{\alpha})$ is zero, the set of extensions of $B$ by $L$ has the structure of a torsor over $H^2(B; Z(L))$.
\end{enumerate}
\end{thm}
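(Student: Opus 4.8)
The plan is to construct the obstruction class $\ob(\bar\alpha)$ by explicit cocycles, following the classical Eilenberg--MacLane / Hochschild pattern adapted to the Lie--Rinehart setting. First I would fix a lifting pair $(\alpha,\rho)$, whose existence was already noted: $\alpha\colon B\to\Der_\calD(L)$ lifts $\bar\alpha$ as an $R$-module map (using projectivity of $B$), and $\rho\in\Omega^2_B(L)$ satisfies $\ad_{\rho(b_1,b_2)}=F_\alpha(b_1,b_2)$. The idea is that if an extension $\cA$ inducing $\bar\alpha$ existed, one could choose an $R$-linear splitting $\tau\colon B\to\cA$ of $0\to L\to\cA\to B\to 0$; then the bracket on $\cA$ would be completely encoded by the pair $(\alpha,\rho)$ via $[\tau(b),\ell]_\cA=\alpha(b)(\ell)$ and $[\tau(b_1),\tau(b_2)]_\cA=\tau([b_1,b_2]_B)+\rho(b_1,b_2)$, and the Jacobi identity on $\cA$ would impose exactly the two conditions $\ad_\rho=F_\alpha$ (already imposed) and $d_\alpha\rho=0$ in $\Omega^3_B(Z(L))$. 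In general $d_\alpha\rho$ need not vanish, but one checks directly that $d_\alpha\rho$ is closed under $d_\alpha$ and takes values in the centre $Z(L)$ (because $\ad_{d_\alpha\rho}=d_\alpha F_\alpha=0$ by the Bianchi identity and the fact that $F_\alpha$ is already $\ad_\rho$), hence defines a class $\ob(\bar\alpha):=[d_\alpha\rho]\in H^3(B;Z(L))$.

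Next I would verify that this class is independent of the choices. Changing $\alpha$ to $\alpha'=\alpha+\ad_\phi$ for $\phi\in\Omega^1_B(L)$, and correspondingly adjusting $\rho$, changes $d_\alpha\rho$ by an exact term $d_\alpha(\text{something in }\Omega^2_B(Z(L)))$ plus terms lying in the image of $\ad$; a short computation — of the type routinely carried out in \cite{TheThreeGuys,mackenzie} — shows the class in $H^3(B;Z(L))$ is unchanged. Then for part (i): if $\ob(\bar\alpha)=0$, pick $\rho$ with $d_\alpha\rho=0$ (after subtracting an exact correction in $\Omega^2_B(Z(L))$, which does not disturb $\ad_\rho=F_\alpha$ since $Z(L)$ is $\ad$-trivial), and build $\cA:=B\oplus L$ as an $R$-module with anchor $b(\mathrm{pr}_B)$ and bracket as above; the two identities $\ad_\rho=F_\alpha$ and $d_\alpha\rho=0$ are precisely what is needed for Jacobi and the Leibniz rule, so $\cA$ is a Lie--Rinehart algebra extension inducing $\bar\alpha$. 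Conversely, as sketched above, an extension produces a lifting pair with $d_\alpha\rho=0$, so $\ob(\bar\alpha)=0$.

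For part (ii), assuming $\ob(\bar\alpha)=0$, I would fix one extension $\cA_0$ with lifting pair $(\alpha_0,\rho_0)$, $d_{\alpha_0}\rho_0=0$. Any other extension inducing $\bar\alpha$ admits, after choosing a splitting, a lifting pair $(\alpha,\rho)$; since both $\alpha$ and $\alpha_0$ lift $\bar\alpha$, they differ by $\ad_\phi$ for some $\phi\in\Omega^1_B(L)$, and absorbing $\phi$ into the splitting we may assume $\alpha=\alpha_0$. Then $\ad_{\rho-\rho_0}=F_\alpha-F_{\alpha_0}=0$, so $\rho-\rho_0\in\Omega^2_B(Z(L))$, and $d_\alpha(\rho-\rho_0)=0$; equivalences of extensions correspond to shifting $\rho$ by $d_\alpha$ of an element of $\Omega^1_B(Z(L))$ together with a re-choice of splitting. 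This identifies the set of equivalence classes of extensions inducing $\bar\alpha$ with $H^2(B;Z(L))$, once one checks the action is well defined, free, and transitive — the transitivity and freeness being exactly the statements that two extensions are equivalent iff their difference cocycles are cohomologous.

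The main obstacle I expect is bookkeeping rather than conceptual: carefully tracking how the bracket and Jacobi identity on $\cA$ decompose along the (non-canonical) splitting, and in particular confirming that the curvature term $d_\alpha\rho$ genuinely lands in $Z(L)$ and that all the ``change of splitting / change of lift'' computations close up modulo $d_\alpha$-exact terms in the centre-valued complex. One must be careful that $\Der_\calD(L)$ (first-order operators) rather than $\Der_{\cO_X}(L)$ is the right target, so that $\alpha$ is a genuine $B$-connection compatible with anchors; this is what makes the Leibniz rule \eqref{leibniz} work out in the constructed extension. Everything else is a direct transcription of the Lie-algebra nonabelian extension theory of \cite{TheThreeGuys} and the Lie-algebroid treatment of \cite{mackenzie}, with $k$-linearity replaced by $R$-linearity and $\mathbb C$-Lie-algebra cohomology replaced by Lie--Rinehart cohomology $H^\bullet(B;-)$.
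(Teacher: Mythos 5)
Your proposal is correct and follows essentially the same route as the paper: define $\ob(\bar\alpha)=[d_\alpha\rho]$ for a lifting pair $(\alpha,\rho)$ with $\ad_\rho=F_\alpha$, check centrality via Bianchi and independence of the choices by the change-of-lift computation, build the extension on $B\oplus L$ with the bracket $[\cdot,\cdot]_{\alpha,\rho}$ once a central correction kills $d_\alpha\rho$, and measure the difference of two extensions by a closed element of $\Omega^2_B(Z(L))$ to get the $H^2(B;Z(L))$-torsor. The only detail left implicit is the closedness $d_{\bar\alpha}(d_\alpha\rho)=d_\alpha^2\rho=[\rho,\rho]=0$, which holds by graded skew-symmetry of the bracket on two-forms, exactly as in the paper.
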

\begin{proof}
We begin by constructing the obstruction class. Let $(\alpha,\rho)$ be a lifting pair of $\bar{\alpha}$ and define $\lambda_{(\alpha,\rho)} = d_\alpha \rho$. One checks that  $\lambda_{(\alpha,\rho)}$ takes values in $Z(L)$, so that it defines an element in $\Omega^3_B\big( Z(L) \big)$. Since $d_{\bar{\alpha}} \lambda = d^2_\alpha \rho$, from Equation \eqref{curvature_1} and Remark \ref{bracket} we obtain that
$d_{\bar{\alpha}} \lambda = [\rho,\rho] $
vanishes, as the bracket is skew-symmetric on two forms. The obstruction class $\ob(\bar{\alpha})$   is then the cohomology class $[\lambda_{(\alpha,\rho)}]\in H^3(B;Z(L))$. 
We need to show that the definition of this class does not depend on the choice of the lifting pair, i.e., we need to prove that if $(\alpha',\rho')$ is another lifting pair, then the corresponding $\lambda_{(\alpha',\rho')}$ is cohomologous to $\lambda_{(\alpha,\rho)}$. 
To this end, first assume that $\alpha' = \alpha$. In this case  $\ad_{\rho'-\rho} = F_\alpha - F_\alpha = 0$, so that $\rho'-\rho$ takes values in $Z(L)$, which implies  
\[
\lambda_{(\alpha,\rho')} - \lambda_{(\alpha,\rho)} = d_{\bar{\alpha}} (\rho'-\rho),
\]
proving that $\lambda_{(\alpha,\rho)}$ and $\lambda_{(\alpha,\rho')}$ are cohomologous.
Let now $(\alpha',\rho')$ be any other lifting pair of $\bar{\alpha}$. Then there exists $\phi \in \Omega^1_B(L)$ such that $\alpha'-\alpha = \ad_\phi$. The following lemma proves that the class $\ob(\bar\alpha)$ is independent of the choice of the lifting pair.
\begin{lemma} \label{change_lifting_pairs}
Let $(\alpha,\rho)$ be a lifting pair of $\bar{\alpha}$, and $\phi \in \Omega^1_B(L)$.
Then $$(\alpha + \ad_\phi , \rho + d_\alpha \phi + \frac{1}{2} [\phi,\phi])$$ is another lifting pair of $\bar{\alpha}$, and we have 
\[
\lambda_{(\alpha + \ad_\phi , \rho + d_\alpha \phi + \frac{1}{2} [\phi,\phi])} = \lambda_{(\alpha,\rho)}.
\]
\end{lemma}

This   lemma is proved  by   direct computation,   using the fact that  $d_\alpha$  is a graded derivation of the bracket $[\cdot,\cdot]$ defined on $\Omega^\bullet_B(L)$, together with the  result established by the following lemma. 
\begin{lemma} \label{lem_change_d}
Let $\alpha, \alpha'$ be two liftings of $\bar{\alpha}$ such that $\alpha'-\alpha = \ad_{\phi}$ for some $\phi:B \to L$.
For every $\eta \in \Omega^p_B(L)$, the corresponding differentials $d_{\alpha},d_{\alpha'}$ satisfy the   formula
\begin{equation}\label{change_d}
d_{\alpha'} \eta - d_\alpha \eta = [\phi,\eta].
\end{equation}
\end{lemma}

Now, given a lifting pair $(\alpha,\rho)$ of $(L,\bar{\alpha})$, and after defining
\begin{equation}\label{bracket_split}
[(b,l),(b',l')]_{\alpha,\rho} = ([b,b'] , [l,l'] + \alpha(b)(l')- \alpha(b')(l) +\rho(b,b')),\,\forall b,b' \in B,\,  l,l' \in L,
\end{equation}
we can   state the following lemma, whose proof is a straightforward computation.

\begin{lemma} \label{lemma_b} Given a lifting pair $(\alpha,\rho)$:
\begin{enumerate}
\item[(1)] the bracket $[\cdot, \cdot]_{\alpha,\rho}$ equips $B\oplus L$ with a skew-symmetric bracket which is compatible with $\bar{\alpha}$, with the canonical maps $L \to B\oplus L$ and $B \oplus L \to B$, and with the brackets defined on $L$ and  $B$; 
\item[(2)] the bracket $[\cdot, \cdot]_{\alpha,\rho}$ defined in (\ref{bracket_split}) satisfies the Jacobi identity if and only if  $\lambda_{(\alpha,\rho)} = 0$. When this happens, $(B\oplus L , [\cdot, \cdot]_{\alpha,\rho})$ is a Lie-Rinehart algebra extension of $B$ by $L$, inducing the coupling $\bar{\alpha}$;
\item[(3)] two lifting pairs $(\alpha,\rho)$ and $(\alpha',\rho')$ define equivalent extensions if and only if there exists $\eta \in \Omega^1_B(L)$ such that $\alpha'-\alpha = \ad_\eta$, and 
\[
\rho'-\rho - d_\alpha \eta +\frac{1}{2} [\eta,\eta]= d_{\bar{\alpha}} \beta
\] 
for some $\beta \in \Omega_B^1(Z(L))$.
\end{enumerate}
\end{lemma}

We can now conclude the proof of   Theorem \ref{thm_local}. 

 (i)  Assume that an extension as \eqref{ext} is given. Choose a splitting $s: B \to A$, which exists as $B$ is assumed to be 
a projective $R$-module. Define $\alpha_s: B \to \Der_\calD L$ via the formula $\alpha(b)(l) = [s(b),l]$, and $\rho_s \in \Omega^2_B(L)$ via $\rho_s(b_1,b_2) = [s(b_1), s(b_2)] - s([b_1,b_2])$. 
Then $(\alpha_s,\rho_s)$ is a lifting pair of $\bar{\alpha}$, so that $\lambda_s = \lambda_{(\alpha_s,\rho_s)}$ is a representative of $\ob(\bar{\alpha})$. The splitting $s$ induces also an isomorphism $A \stackrel{s}{\to} B \oplus L$. Under this isomorphism, one obtains a bracket on $B\oplus L$ from the bracket on $A$. This bracket coincides with $[\cdot,\cdot]_{\alpha_s,\rho_s}$. Then $\ob(\bar{\alpha}) = [\lambda_s] = 0$ by Lemma \ref{lemma_b}.

Conversely, assume that $\ob(\bar{\alpha}) = 0$. Choose any lifting pair $(\alpha,\rho)$ of $\bar{\alpha}$. Since the cohomology class of $\lambda_{(\alpha,\rho)}$ is zero, there exists $w\in \Omega^2_B(Z(L))$ such that $d_\alpha \rho = d_{\bar \alpha} w$. Now, since $w$ takes values in the centre of $L$, also $(\alpha, \rho-w)$ is a lifting pair.
On the other hand, from the definition of $w$ we have $d_\alpha(\rho - w) = 0$, so by Lemma \ref{lemma_b} the bracket $[\cdot,\cdot]_{\alpha,\rho-w}$ on $B\oplus L$ is an extension of $B$ by $L$. 

(ii)  To conclude the proof one needs to define an action of $H^2(B; Z(L))$ on the set of equivalence classes of the extensions. Let an extension be given, choose a splitting $s:B \to A$, and construct a lifting pair $(\alpha_s,\rho_s)$ as before. Let $\gamma \in \Omega^2_{B}(Z(L))$, with $d_{\bar \alpha} \gamma = 0$, be a representative of a class in $H^2(B; Z(L))$. Then $[\cdot,\cdot]_{\alpha_s, \rho_s + \gamma}$ defines a new extension, which by Lemma \ref{lemma_b} is equivalent to $[\cdot,\cdot]_{\alpha_s,\rho_s}$ if and only if $\gamma = d_{\bar{\alpha}} \beta$.

This action defines a torsor structure because if one is given two extensions $A$ and $ A'$,  by choosing two splittings $s:B\to A$ and $s':B\to A'$, one obtains two lifting pairs $(\alpha_s,\rho_s)$ and $(\alpha_{s'},\rho_{s'})$. One checks that, for $\phi\in \Omega^1_B(L)$ such that $\alpha'-\alpha= \ad_\phi$, the formula
\begin{equation}\label{rho_phi}
\gamma = \rho'-\rho - d_\alpha \phi - \frac{1}{2}[\phi,\phi]
\end{equation}
defines a $d_{\bar{\alpha}}$-closed element $\gamma\in\Omega_B^2(Z(L))$, which  singles out a class in $H^2(B;Z(L))$.
\end{proof}

\subsection{The case of holomorphic Lie algebroids}\label{ss:holoLie}

With the local case in mind, we can now deal with the global situation. Let $\cB$ be a holomorphic Lie algebroid and let $(\calL,\bar\alpha)$ be a coupling of $\cB$, where $\calL$ is a totally intransitive holomorphic Lie algebroid.
We proceed first to the construction of the obstruction class $\ob(\bar{\alpha}) \in \mathbb{H}^3 \big( X; \tau^{\geq 1}\Omega^\bullet_\cB(Z(\calL)) \big)$. Let $\mathfrak U = \{U_i\}$ be a good open cover of $X$ (where by ``good'' we mean that each $U_i$ is a connected Stein manifold and that for each $i_0,\ldots,i_p$ the intersection $U_{i_0\cdots i_p} = \bigcap_a U_{i_a}$, when nonvoid, is a connected Stein manifold). We compute the hypercohomology using   \v{C}ech resolutions. More precisely, $\mathbb{H}^3 \big(X; \tau^{\geq 1}\Omega^\bullet_\cB(Z(\calL)) \big)$ is the degree 3 cohomology of the total complex $T^\bullet$ associated to the double complex $K^{q}_{p} = \check{C}^p (\mathfrak U; \Omega^q_\cB(Z(\calL)))$ for $p\geq 0$ and $q \geq 1$, with differentials $d_1 = \delta $, the \v{C}ech differential, and $d_2 = d_{\bar{\alpha}}$. In this way, a representative of a cohomology class in $\mathbb{H}^3 \big( X; \tau^{\geq 1}\Omega^\bullet_\cB(Z(\calL)) \big)$ will consist of an element of $T^3 = K^{3}_0 \oplus K^{2}_1 \oplus K^{1}_2$. 

Over each $U_i$ we get a coupling for the Lie-Rinehart algebra $\cB(U_i)$, i.e., a pair $(\calL(U_i),\bar\alpha(U_i))$, where $\bar{\alpha}(U_i): \cB(U_i) \to \Out_\calD(\calL)(U_i)$. For each $i$ we can choose a lifting pair $(\alpha_i,\rho_i)$ of $\bar{\alpha}(U_i)$, and on the double intersections we can choose $\phi_{ij} \in \Omega_\cB^1(\calL)(U_{ij})$ such that $\alpha_j - \alpha_i = \ad_{\phi_{ij}}$. This motivates the following:

\begin{defin}
A {\em lifting triple} of $(\calL,\bar{\alpha})$ is a triple $$(\{\alpha_i\},\{\rho_i\},\{\phi_{ij}\})$$ with $\{\alpha_i\} \in \check{C}^0(\mathfrak U ; \Omega^1_\cB(\Der_\calD (\calL) ))$, $\{\rho_i\} \in \check{C}^0(\mathfrak U; \Omega^2_\cB(\calL))$, and $\{\phi_{ij}\} \in \check{C}^1(\mathfrak U; \Omega^1_\cB(\calL))$, such that:
\begin{enumerate}
\item for each $i$ the pair $(\alpha_i,\rho_i)$ is a lifting pair for $\bar{\alpha}_{|U_i}$, 
\item for each $i,j$ on the double intersections $U_{ij}$ we have $\alpha_j - \alpha_i = \ad_{\phi_{ij}}$.
\end{enumerate}
\end{defin}

Now let $(\{\alpha_i\},\{\rho_i\},\{\phi_{ij}\})$ be a lifting triple of $(\calL,\bar{\alpha})$. For each $i$ we can construct $\lambda_i = \lambda_{(\alpha_i,\rho_i)}\in \Omega^3_\cB(Z(\calL)) (U_i)$ as in the previous section. The collection $\{\lambda_i\}$ defines a \v{C}ech cochain in $K^{3}_0$. On the double intersections let
$$
t_{ij} = \rho_j - \rho_i - (d_{\alpha_i} \phi_{ij} + \frac{1}{2}[\phi_{ij},\phi_{ij}]).
$$
These $t_{ij}$ are elements of $\Omega_\cB^2(Z(\calL))(U_{ij})$ (cf. equation \eqref{rho_phi}), and their collection defines the \v{C}ech cochain $\{t_{ij}\} \in K^{2}_1$. Finally, on the triple intersections $U_{ijk}$ let
\[
q_{ijk} = (\delta \phi)_{ijk} = -\phi_{jk} + \phi_{ik} - \phi_{ij}.
\]
By the definition of the $\phi_{ij}$, it follows that $\ad_{q_{ijk}} = 0$, so that one obtains a \v{C}ech cochain $\{q_{ijk}\} \in K^{1}_2$.

\begin{defin}
The triple $(\{\lambda_i\}, \{t_{ij}\}, \{q_{ijk}\})$ defines an element in $T^3$, which will be called the cochain associated to the lifting triple $(\{\alpha_i\},\{\rho_i\},\{\phi_{ij}\})$.
\end{defin}

We can prove the following statement.

\begin{prop} \label{cocycle}
The triple $(\{\lambda_i\}, \{t_{ij}\}, \{q_{ijk}\} ) \in T^3$ is closed under the total differential of $T$, and therefore defines a cohomology class in $\mathbb{H}^3 \big( X; \tau^{\geq 1}\Omega^\bullet_\cB(Z(\calL)) \big)$.
\end{prop}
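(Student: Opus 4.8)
The plan is to show that the total differential $D = \delta \pm d_{\bar\alpha}$ annihilates the triple $(\{\lambda_i\}, \{t_{ij}\}, \{q_{ijk}\})$ by checking the three components of $D(\{\lambda_i\}, \{t_{ij}\}, \{q_{ijk}\})$ living in $K^4_0 \oplus K^3_1 \oplus K^2_2 \oplus K^1_3$, one at a time. The component in $K^4_0 = \check C^0(\mathfrak U; \Omega^4_\cB(Z(\calL)))$ is $\{d_{\bar\alpha}\lambda_i\}$; this vanishes on each $U_i$ by the computation already carried out in the proof of Theorem \ref{thm_local}, where it was shown that $d_{\bar\alpha}\lambda_{(\alpha,\rho)} = [\rho,\rho] = 0$. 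So the first component is immediate.

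Next I would treat the component in $K^3_1 = \check C^1(\mathfrak U;\Omega^3_\cB(Z(\calL)))$, which is $\{(\delta\lambda)_{ij} \pm d_{\bar\alpha} t_{ij}\}$, i.e. $\{\lambda_j - \lambda_i \pm d_{\bar\alpha} t_{ij}\}$ on each $U_{ij}$. Here the key input is Lemma \ref{change_lifting_pairs}: on $U_{ij}$ we have $\alpha_j = \alpha_i + \ad_{\phi_{ij}}$, and $(\alpha_i, \rho_i)$ together with the shifted lifting pair $(\alpha_i + \ad_{\phi_{ij}}, \rho_i + d_{\alpha_i}\phi_{ij} + \tfrac12[\phi_{ij},\phi_{ij}])$ have equal $\lambda$'s. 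Since $\rho_j$ and $\rho_i + d_{\alpha_i}\phi_{ij} + \tfrac12[\phi_{ij},\phi_{ij}]$ differ by $t_{ij}$ (a $Z(\calL)$-valued form) and $\alpha_j$ agrees with that shifted lift, the case $\alpha' = \alpha$ of the independence argument in Theorem \ref{thm_local} gives $\lambda_j - \lambda_i = \lambda_{(\alpha_j,\rho_j)} - \lambda_{(\alpha_j, \rho_j - t_{ij})} = d_{\bar\alpha} t_{ij}$ (up to the sign fixed by the total-complex convention), so this component also vanishes.

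The remaining two components, in $K^2_2$ and $K^1_3$, are the ones requiring genuine \v{C}ech bookkeeping, and I expect the $K^2_2$ piece $\{(\delta t)_{ijk} \pm d_{\bar\alpha} q_{ijk}\}$ to be the main obstacle. One expands $(\delta t)_{ijk} = t_{jk} - t_{ik} + t_{ij}$ using the definition of the $t_{\bullet}$'s; the $\rho$-terms cancel telescopically, so one is left with a combination of $d_{\alpha_i}\phi_{\bullet}$ and $[\phi_\bullet,\phi_\bullet]$ terms over $U_{ijk}$. To combine these one must pass from $d_{\alpha_j}$ to $d_{\alpha_i}$ (or to $d_{\bar\alpha}$) using Lemma \ref{lem_change_d}, namely $d_{\alpha_j}\eta - d_{\alpha_i}\eta = [\phi_{ij},\eta]$, and then use that $q_{ijk} = (\delta\phi)_{ijk}$ is $Z(\calL)$-valued (so $\ad_{q_{ijk}} = 0$) together with the graded Jacobi identity and graded skew-symmetry of the bracket on $\Omega^\bullet_\cB(\calL)$ from Remark \ref{bracket}; the upshot should be $(\delta t)_{ijk} = \pm d_{\alpha_i} q_{ijk} = \pm d_{\bar\alpha} q_{ijk}$ (the last equality because $q_{ijk}$ is central, so $d_{\alpha_i}$ and $d_{\bar\alpha}$ agree on it). Finally, the $K^1_3$ component is simply $(\delta q)_{ijk\ell} = (\delta\delta\phi)_{ijk\ell} = 0$ since $\delta^2 = 0$. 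Assembling the four vanishing statements, and being careful with the signs dictated by whichever total-complex sign convention is in force (so that the mixed terms match rather than double up), gives $D(\{\lambda_i\},\{t_{ij}\},\{q_{ijk}\}) = 0$, hence a well-defined class in $\mathbb H^3\big(X;\tau^{\geq 1}\Omega^\bullet_\cB(Z(\calL))\big)$. The only real care needed throughout is the sign convention and the repeated, but routine, use of Lemmas \ref{change_lifting_pairs} and \ref{lem_change_d}.
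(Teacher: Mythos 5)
Your proposal is correct and follows essentially the same route as the paper: reduce the closedness of the triple to the four componentwise identities in $K^4_0$, $K^3_1$, $K^2_2$ and $K^1_3$, settle the first by the local computation $d_{\bar\alpha}\lambda_{(\alpha,\rho)}=[\rho,\rho]=0$ from the proof of Theorem \ref{thm_local}, the last by $\delta^2=0$, and the middle two using Lemma \ref{lem_change_d} together with the centrality of $t_{ij}$ and of $q_{ijk}=(\delta\phi)_{ijk}$. The one divergence is your handling of the $K^3_1$ component: you deduce $(\delta\lambda)_{ij}=d_{\bar\alpha}t_{ij}$ from Lemma \ref{change_lifting_pairs} applied on $U_{ij}$ (so that $\lambda_{(\alpha_j,\rho_j-t_{ij})}=\lambda_i$) combined with the central-shift formula $\lambda_{(\alpha,\rho')}-\lambda_{(\alpha,\rho)}=d_{\bar\alpha}(\rho'-\rho)$, whereas the paper expands this identity directly using Lemma \ref{lem_change_d}, equation \eqref{curvature_1} and the derivation property of $d_{\alpha_i}$; your version is equivalent (Lemma \ref{change_lifting_pairs} encapsulates the same computation) and slightly cleaner. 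Your $K^2_2$ component is only sketched, but it names exactly the ingredients the paper's proof uses --- telescoping of the $\rho$'s, the change-of-differential formula of Lemma \ref{lem_change_d}, the bracket identity coming from $\ad_{q_{ijk}}=0$ --- so, modulo fixing the total-complex signs you left as $\pm$, the plan fills in to the paper's argument.
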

\begin{proof} The claim is equivalent to the  four equations
\begin{enumerate}
\item $d_{\bar{\alpha}} \lambda_i = 0$;
\item $d_{\bar{\alpha}} t_{ij} = (\delta  \lambda)_{ij}$;
\item $d_{\bar{\alpha}} q_{ijk} = (\delta  t)_{ijk}$;
\item $\delta  q =0$.
\end{enumerate}

The first  equation follows from the analogous proposition which holds true in the local case, while the fourth   follows  from the definition of $q$, as $\delta ^2 =0$.

The other two equations require a little more effort. Let us begin with the third. 
Since $\delta ^2 \rho =0$, we have
\begin{align} 
(\delta t)_{ijk} & = -d_{\alpha_i} \phi_{ij} + d_{\alpha_i} \phi_{ik} - d_{\alpha_j} \phi_{jk} - \frac{1}{2}[\phi_{ij},\phi_{ij}] + \frac{1}{2}[\phi_{ik}, \phi_{ik}] - \frac{1}{2} [\phi_{jk}, \phi_{jk}]. \label{eqn:cociclo1}
\end{align}
On the other hand, since the l.h.s.\ of (iii) is 
\[
d_{\bar{\alpha}} q_{ijk} = d_{\alpha_i} q_{ijk} = - d_{\alpha_i} \phi_{ij} + d_{\alpha_i} \phi_{ik} - d_{\alpha_i} \phi_{jk},
\]
we just need to show that
\[
- d_{\alpha_j} \phi_{jk} - \frac{1}{2}[\phi_{ij},\phi_{ij}] + \frac{1}{2}[\phi_{ik}, \phi_{ik}] - \frac{1}{2} [\phi_{jk}, \phi_{jk}]
= - d_{\alpha_i} \phi_{jk}.
\]

By   Lemma \ref{change_d}, this is equivalent to
\[
\frac{1}{2}[\phi_{ij},\phi_{ij}] - \frac{1}{2}[\phi_{ik}, \phi_{ik}] + \frac{1}{2}[\phi_{jk}, \phi_{jk}] = - [\phi_{ij},\phi_{jk}]\,,
\]
which holds true since $\ad_{(\delta \phi)_{ijk}} = 0$ implies the identity
\[
[\phi_{ij}, \phi_{ij}] = [\phi_{ij} , \phi_{ik}] - [\phi_{ij} , \phi_{jk}].
\]

Finally, we prove that also the second equation holds true. Expanding it we obtain:
\[
d_{\alpha_i} (\rho_j - \rho_i - d_{\alpha_i}\phi_{ij} - \frac{1}{2} [\phi_{ij},\phi_{ij}] ) = d_{\alpha_j} \rho_j - d_{\alpha_i} \rho_i\ ,
\]
which is equivalent to
\[
d_{\alpha_i} \rho_j - d_{\alpha_j} \rho_j = d_{\alpha_i}^2 \phi_{ij} + \frac{1}{2} d_{\alpha_i}([\phi_{ij} , \phi_{ij}])\ .
\]

Using again Lemma \ref{change_d} and equation \eqref{curvature_1}, we have
\[
-[\phi_{ij}, \rho_j] = [\rho_i , \phi_{ij}] + \frac{1}{2}d_{\alpha_i} [\phi_{ij},\phi_{ij}].
\]
Since $d_{\alpha_i}$ is a derivation for the bracket, $\frac{1}{2}d_{\alpha_i} [\phi_{ij},\phi_{ij}] = [d_{\alpha_i}\phi_{ij},\phi_{ij}]$, and the thesis now follows since $\rho_j-\rho_i - d_{\alpha_i} \phi_{ij}$ takes values in the center of $\calL$.
\end{proof}

We can eventually introduce the following cohomology class.
\begin{defin}
 $\ob(\bar{\alpha})$ is the cohomology class  of the triple $(\{\lambda_i\},\{t_{ij}\},\{q_{ijk}\})$ in $\mathbb{H}^3 \big( X;$ $ \tau^{\geq 1}\Omega^\bullet_\cB(Z(\calL) \big)$.
\end{defin}

The definition of the triple $(\{\lambda_i\},\{t_{ij}\},\{q_{ijk}\})$ depends on the choice of a lifting triple, so we should check that two different lifting triples yield cohomologous cochains. To this end, at first we consider the case of two lifting triples having the same $\alpha_i$. Let $(\{\alpha_i\},\{\rho_{i}\},\{\phi_{ij}\})$ and $(\{\alpha_i\},\{\rho'_{i}\},\{\phi'_{ij}\})$ be  two such lifting triples, and $(\{\lambda_i\},\{t_{ij}\},\{q_{ijk}\})$ and $(\{\lambda'_i\},\{t'_{ij}\},\{q'_{ijk}\})$ the corresponding cocycles. Since the $\alpha_i$ are the same, the differences $\rho'_i-\rho_i$ and $\phi'_{ij}-\phi_{ij}$ take values in $Z(\calL)$. Then we have:
\begin{eqnarray}
\lambda'_i - \lambda_i &=& d_{\bar{\alpha}} (\rho'_i - \rho_i),\nonumber\\
t'_{ij} - t_{ij} &=& \delta(\rho'-\rho)_{ij} - d_{\bar{\alpha}}(\phi'_{ij} - \phi_{ij}),\nonumber\\
q'_{ijk} - q_{ijk} &=& \delta(\phi'-\phi)_{ijk},\nonumber
\end{eqnarray}
and the cocycles are cohomologous.

The proof for two general lifting triples follows from the following:
\begin{lemma}
Let $(\{\alpha_i\},\{\rho_{i}\},\{\phi_{ij}\})$ be a lifting triple of $\bar{\alpha}$, $(\{\lambda_i\},\{t_{ij}\},\{q_{ijk}\})$ the associated cocycle and $\{\eta_i\} \in C^0(\mathfrak U ; \Omega_\cB^1(\calL))$.
Then $(\{\alpha'_i\},\{\rho'_{i}\},\{\phi'_{ij}\})$ is another lifting triple of $\bar{\alpha}$, where:
\begin{eqnarray}
&&\alpha'_i = \alpha_i + \ad_{\eta_i},\nonumber\\
&&\rho'_i = \rho_i + d_{\alpha_i} \eta_i +\frac{1}{2}[\eta_i,\eta_i],\nonumber\\
&&\phi'_{ij} = \phi_{ij} + \eta_j - \eta_i\nonumber.
\end{eqnarray}
Its associated cocycle $(\{\lambda'_i\},\{t'_{ij}\},\{q'_{ijk}\})$ is equal to $(\{\lambda_i\},\{t_{ij}\},\{q_{ijk}\})$. 
\end{lemma}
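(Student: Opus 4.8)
The plan is to reduce the global statement to its local counterpart, Lemma~\ref{change_lifting_pairs}, applied simultaneously over all open sets of the cover, and then to patch the bookkeeping of the \v{C}ech indices. First I would verify that $(\{\alpha'_i\},\{\rho'_i\},\{\phi'_{ij}\})$ is indeed a lifting triple. That each $(\alpha'_i,\rho'_i) = (\alpha_i + \ad_{\eta_i}, \rho_i + d_{\alpha_i}\eta_i + \tfrac12[\eta_i,\eta_i])$ is a lifting pair for $\bar\alpha_{|U_i}$ is exactly the content of Lemma~\ref{change_lifting_pairs}. For the cocycle condition on double intersections, one computes
\[
\ad_{\phi'_{ij}} = \ad_{\phi_{ij}} + \ad_{\eta_j} - \ad_{\eta_i} = (\alpha_j - \alpha_i) + (\alpha'_j - \alpha_j) - (\alpha'_i - \alpha_i) = \alpha'_j - \alpha'_i,
\]
so condition~(ii) of the definition of lifting triple holds.

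Next I would show $\lambda'_i = \lambda_i$, $t'_{ij} = t_{ij}$, $q'_{ijk} = q_{ijk}$ in turn. The equality $\lambda'_i = \lambda_i$ is precisely the conclusion of Lemma~\ref{change_lifting_pairs} applied on $U_i$, so nothing new is needed there. For the $q$ term, the computation is immediate: $q'_{ijk} = -\phi'_{jk} + \phi'_{ik} - \phi'_{ij} = (\delta\phi)_{ijk} + \delta(\delta\eta)_{ijk} = q_{ijk}$ since $\delta^2 = 0$, where $\delta\eta$ denotes the $1$-cochain $(\delta\eta)_{ij} = \eta_j - \eta_i$. The main computational work is the middle equality $t'_{ij} = t_{ij}$: substituting the new data into $t'_{ij} = \rho'_j - \rho'_i - d_{\alpha'_i}\phi'_{ij} - \tfrac12[\phi'_{ij},\phi'_{ij}]$, I would expand using $\rho'_j - \rho'_i = \rho_j - \rho_i + d_{\alpha_i}\eta_j - d_{\alpha_i}\eta_i + \tfrac12[\eta_j,\eta_j] - \tfrac12[\eta_i,\eta_i]$ (rewriting $d_{\alpha_j}\eta_j$ via Lemma~\ref{lem_change_d} as $d_{\alpha_i}\eta_j + [\phi_{ij},\eta_j]$ so that all differentials are taken with respect to the single connection $\alpha_i$), then expand $d_{\alpha'_i}\phi'_{ij}$ using Lemma~\ref{lem_change_d} again (to replace $d_{\alpha'_i}$ by $d_{\alpha_i} + [\eta_i,\,\cdot\,]$) together with the fact that $d_{\alpha_i}$ is a graded derivation of the bracket on $\Omega^\bullet_\cB(\calL)$, and finally expand the quadratic term $[\phi'_{ij},\phi'_{ij}] = [\phi_{ij} + \eta_j - \eta_i,\,\phi_{ij} + \eta_j - \eta_i]$ by bilinearity. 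After collecting terms, everything not already present in $t_{ij}$ must cancel; the cancellations are forced by graded skew-symmetry of the bracket, the Jacobi identity, and the derivation property, exactly as in the proof of Lemma~\ref{change_lifting_pairs}.

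The step I expect to be the main obstacle is keeping the signs and the shuffle-combinatorics straight in that middle computation, because the mixed terms coming from $d_{\alpha_i}\eta_j$ versus $d_{\alpha_i}\eta_i$, from the cross-bracket $[\phi_{ij},\eta_j - \eta_i]$ produced by Lemma~\ref{lem_change_d}, and from $[\eta_i,\phi_{ij}]$ produced by the change of connection in $d_{\alpha'_i}\phi'_{ij}$ all look superficially alike and must be matched carefully. The cleanest route is to observe that $t_{ij}$, as a function of a lifting pair on $U_{ij}$ together with a gluing $1$-form, transforms under $(\alpha_i,\rho_i,\phi_{ij})\mapsto(\alpha_i + \ad_{\eta_i},\rho_i + d_{\alpha_i}\eta_i + \tfrac12[\eta_i,\eta_i],\phi_{ij} + \eta_j - \eta_i)$ in exactly the way the quantity $\rho + d_\alpha\phi + \tfrac12[\phi,\phi]$ transforms in Lemma~\ref{change_lifting_pairs}: one is really applying that lemma once on $U_i$ with parameter $\eta_i$ and once on $U_j$ with parameter $\eta_j$, and $t_{ij}$ measures the discrepancy between the $U_i$-gluing and the $U_j$-data, which is unchanged. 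Making this reduction precise turns the brute-force identity into a corollary of Lemma~\ref{change_lifting_pairs} and Lemma~\ref{lem_change_d}, and I would present it that way.
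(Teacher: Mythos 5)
Your proposal is correct and follows essentially the route the paper intends: the paper leaves this lemma as a direct verification built on Lemma \ref{change_lifting_pairs} (which gives $\lambda'_i=\lambda_i$ and the lifting-pair property) and Lemma \ref{lem_change_d} (which handles the change of connection in the $t_{ij}$ computation), with $q'_{ijk}=q_{ijk}$ following from $\delta^2\eta=0$, exactly as you outline. Only a small slip: your displayed formula for $\rho'_j-\rho'_i$ omits the cross-term $[\phi_{ij},\eta_j]$ produced by Lemma \ref{lem_change_d}, but your parenthetical shows you intend to include it, and with it the expansion of $t'_{ij}$ cancels correctly (using only the symmetry of the bracket on $1$-forms), giving $t'_{ij}=t_{ij}$.
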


We can now prove the first part of Theorem \ref{extthm}, i.e.:
\begin{prop}
The class $\ob(\bar{\alpha})$ vanishes if and only if there exist a holomorphic Lie algebroid extension of $\cB$ by $\calL$.
\end{prop}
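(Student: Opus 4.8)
The plan is to prove the equivalence by reducing the global assertion to the local result of Theorem~\ref{thm_local}, glued together via the \v{C}ech machinery already set up. The essential idea is that over each $U_i$ of the good cover, a lifting pair $(\alpha_i,\rho_i)$ exists (Theorem~\ref{thm_local} says the local obstruction $[\lambda_i]\in H^3(\cB(U_i);Z(\calL)(U_i))$ vanishes precisely when a local extension exists, and local extensions of Lie-Rinehart algebras over Stein opens always translate back to holomorphic Lie algebroid extensions over $U_i$), and the cocycle $(\{\lambda_i\},\{t_{ij}\},\{q_{ijk}\})$ measures the failure of these local extensions to patch into a global one.

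First I would prove the ``only if'' direction. Given a holomorphic Lie algebroid extension $0\to\calL\to\cA\to\cB\to 0$ inducing $\bar\alpha$, I would choose, over each $U_i$, a splitting $s_i\colon\cB(U_i)\to\cA(U_i)$ (possible since $\cB$ is locally free, hence $\cB(U_i)$ is projective over $\cO_X(U_i)$), and set $\alpha_i(b)(\ell)=[s_i(b),\ell]_\cA$, $\rho_i(b_1,b_2)=[s_i(b_1),s_i(b_2)]_\cA-s_i([b_1,b_2])$, and $\phi_{ij}=s_j-s_i\in\Omega^1_\cB(\calL)(U_{ij})$ (this takes values in $\calL$ because $s_i$ and $s_j$ are both sections of the projection to $\cB$). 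One checks directly that $(\{\alpha_i\},\{\rho_i\},\{\phi_{ij}\})$ is a lifting triple, so by Proposition~\ref{cocycle} it produces a representative of $\ob(\bar\alpha)$. But now each $\lambda_i=d_{\alpha_i}\rho_i$ vanishes by Lemma~\ref{lemma_b}(2) (the bracket on $\cA(U_i)$ transported via $s_i$ is exactly $[\cdot,\cdot]_{\alpha_i,\rho_i}$, and it satisfies Jacobi); the relation $s_j=s_i+\phi_{ij}$ forces $\rho_j-\rho_i=d_{\alpha_i}\phi_{ij}+\tfrac12[\phi_{ij},\phi_{ij}]$ by a short computation comparing the two bracket formulas, so $t_{ij}=0$; and $\delta s=0$ gives $q_{ijk}=(\delta\phi)_{ijk}=0$. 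Hence the cocycle is identically zero and $\ob(\bar\alpha)=0$.

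For the ``if'' direction, suppose $\ob(\bar\alpha)=0$. Pick any lifting triple $(\{\alpha_i\},\{\rho_i\},\{\phi_{ij}\})$; its associated cocycle $(\{\lambda_i\},\{t_{ij}\},\{q_{ijk}\})$ is a coboundary in $T^\bullet$, so there is $(\{w_i\},\{\beta_{ij}\})\in K^2_0\oplus K^1_1=\check{C}^0(\mathfrak U;\Omega^2_\cB(Z(\calL)))\oplus\check{C}^1(\mathfrak U;\Omega^1_\cB(Z(\calL)))$ with $\lambda_i=d_{\bar\alpha}w_i$, $t_{ij}=\delta w_{ij}-d_{\bar\alpha}\beta_{ij}$, $q_{ijk}=(\delta\beta)_{ijk}$. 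Using the modification lemma above (with $\eta_i$ chosen suitably) together with the already-established fact that replacing $\rho_i$ by $\rho_i-w_i$ keeps a lifting pair and that $Z(\calL)$-valued modifications do not affect $\alpha_i$, I would replace the lifting triple by one in which $\lambda_i=0$ for all $i$ (so by Lemma~\ref{lemma_b}(2) the bracket $[\cdot,\cdot]_{\alpha_i,\rho_i}$ makes $\cB(U_i)\oplus\calL(U_i)$ a Lie-Rinehart extension, i.e., defines a holomorphic Lie algebroid $\cA_i$ over $U_i$), then further adjust $\phi_{ij}$ by the $\beta_{ij}$ so that $t_{ij}=0$ and $q_{ijk}=0$. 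At that point the $\phi_{ij}$ satisfy the \v{C}ech cocycle condition $\delta\phi=0$, and the local extensions $\cA_i$ glue along $U_{ij}$ via the isomorphism induced by $\phi_{ij}$ (the vanishing of $t_{ij}$ is exactly the compatibility of brackets, cf.\ Lemma~\ref{lemma_b}(3) and equation~\eqref{rho_phi}); the cocycle condition on $\phi$ gives the triple-overlap compatibility, so the $\cA_i$ patch to a global holomorphic Lie algebroid $\cA$, which by construction sits in an exact sequence $0\to\calL\to\cA\to\cB\to 0$ inducing $\bar\alpha$.

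The main obstacle is the bookkeeping in the ``if'' direction: one must carefully arrange, using the two modification lemmas (change of lifting pair within a fixed open, and the simultaneous change of $\{\alpha_i\},\{\rho_i\},\{\phi_{ij}\}$), that a \emph{single} new lifting triple simultaneously kills $\lambda_i$, $t_{ij}$ and $q_{ijk}$ — i.e., that the decomposition of the coboundary can be realized by actual modifications of the lifting triple. This requires checking that the freedom in choosing $\{\eta_i\}$ and in replacing $\rho_i\mapsto\rho_i-w_i$, $\phi_{ij}\mapsto\phi_{ij}+\beta_{ij}$ is exactly matched by the coboundary relations, which is where equation~\eqref{rho_phi} and Lemmas~\ref{change_lifting_pairs} and \ref{lem_change_d} do the work; once the triple is normalized to $(0,0,0)$ the gluing of the $\cA_i$ is formal. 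A minor additional point is to confirm that gluing holomorphic Lie algebroids over a good cover with \v{C}ech-compatible isomorphisms indeed produces a global one — this follows because the underlying $\cO_X$-module is $\cB$-plus-$\calL$ locally and the bracket is determined locally, so descent applies.
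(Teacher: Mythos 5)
Your argument is correct and essentially coincides with the paper's proof: in one direction the same lifting triple $(\{\alpha_{s_i}\},\{\rho_{s_i}\},\{\phi_{s_{ij}}\})$ built from local splittings, with the associated cocycle shown to vanish identically, and in the other direction the same modification of an arbitrary lifting triple by the coboundary data followed by gluing the local extensions $(\cB\oplus\calL)_{|U_i}$ along the maps induced by the adjusted $\phi_{ij}$. The only slip is a sign: with the coboundary relations exactly as you wrote them, the adjustment must be $\phi_{ij}-\beta_{ij}$ (the paper's $\phi_{ij}-m_{ij}$), not $\phi_{ij}+\beta_{ij}$, so that $t_{ij}$ and $q_{ijk}$ are killed simultaneously.
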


\begin{proof}
Let an extension as in \eqref{ext} be given. Consider a collection $\{s_i\}$ of splittings $s_i:\cB_{|U_i} \to \cA_{|U_i}$. Out of it we construct $\alpha_{s_i}$ and $\rho_{s_i}$ as in Subsection \ref{LRalgebras}, and, moreover, we define $\phi_{s_{ij}} = s_j - s_i  \in \Omega^1_\cB(\calL)(U_{ij})$. The triple $(\{\alpha_{s_i}\},\{\rho_{s_i}\},\{\phi_{s_{ij}}\})$ is a lifting triple of $(\calL,\bar{\alpha})$. The results of Subsection \ref{LRalgebras}, together with some straightforward computations, allow us to conclude that the corresponding cocycle $(\{\lambda_i\},\{t_{ij}\},\{q_{ijk}\})$ is zero.

Conversely, assume that $\ob(\bar{\alpha}) = 0$, and let $(\{\alpha_i\} , \{\rho_{i}\} , \{\phi_{ij}\})$ be any lifting triple, with corresponding cocycle $(\{\lambda_i\},\{t_{ij}\},\{q_{ijk}\})$. As  this is a coboundary, there exist 
$$
\{a_i\} \in \check{C}^0 \big( \mathfrak U; \Omega^2_\cB(Z(\calL)) \big)\qquad\mbox{and}\qquad \{m_{ij}\}\in \check{C}^1 \big( \mathfrak U; \Omega^1_\cB(Z(\calL)) \big)
$$ 
such that
\begin{equation} \label{eq_ob_1}
\lambda_i = d_{\bar{\alpha}} a_i , \quad t_{ij} = (\delta a)_{ij} - d_{\bar{\alpha}} m_{ij} ,\quad  q_{ijk}= (\delta m)_{ijk} .
\end{equation}
Since $a_i$ and $b_{ij}$ take values in the center $Z(\calL)$, the triple $(\{\alpha_i\}, \{\rho_i - a_i \}, \{\phi_{ij} - m_{ij} \} )$ is again a lifting triple for $(\calL,\bar\alpha)$. Using again the results of Subsection \ref{LRalgebras}, on each open set $U_i$ we can define a bracket $[\cdot,\cdot]_{\alpha_i, \rho_i - a_i}$ on $(\cB \oplus \calL)_{|U_i}$. By Lemma \ref{lemma_b}, this defines a Lie algebroid extension of $\cB_{|U_i}$ by $\calL_{|U_i}$.
On the double intersections $U_{ij}$, the endomorphism of $(\cB\oplus \calL)_{|U_{ij}}$ 
\[
\phi_{ij}-m_{ij} : (b,l) \mapsto (b, l + \phi_{ij}(b) - m_{ij}(b))
\]
is defined. The second equation in \eqref{eq_ob_1} ensures that $\phi_{ij}-m_{ij}$ is a morphism of algebras 
\[
(\cB_{|U_{ij}} \oplus \calL_{|U_{ij}},[\cdot,\cdot]_{\alpha_j,\rho_j-a_j} ) \to (\cB_{|U_{ij}} \oplus \calL_{|U_{ij}},[\cdot,\cdot]_{\alpha_i,\rho_i-a_i} ),
\]
while the third equation in \eqref{eq_ob_1} guarantees that the maps $\phi_{ij} - m_{ij}$ can be used to glue   the local extensions just defined to obtain a Lie algebroid over $X$, which is an extension of $\cB$ by $\calL$.
This ends the proof.
\end{proof}

We finally come to the proof of the second part of Theorem \ref{extthm}, i.e.:
\begin{prop}
When the obstruction $\ob(\bar{\alpha})$ vanishes, the set of isomorphism classes of extensions of $\cB$ by $\calL$ inducing the coupling $(\calL,\bar{\alpha})$ is a  torsor over $\mathbb{H}^2 \big( X; \tau^{\geq 1}\Omega^\bullet_\cB(Z(\calL)) \big)$.
\end{prop}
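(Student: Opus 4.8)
The plan is to mimic the structure of the proof of part (ii) of Theorem~\ref{thm_local}, lifting the torsor argument from the Lie--Rinehart setting to the \v{C}ech bicomplex $T^\bullet$ governing $\mathbb{H}^\bullet\big(X;\tau^{\geq 1}\Omega^\bullet_\cB(Z(\calL))\big)$. First I would fix a reference extension $\cA_0$ with the given coupling $(\calL,\bar\alpha)$, choose a collection of local splittings $\{s_i^0\}$, and thereby obtain a reference lifting triple $(\{\alpha_i^0\},\{\rho_i^0\},\{\phi_{ij}^0\})$ whose associated cochain vanishes. Given any $2$-cocycle in $T^2 = K^2_0 \oplus K^1_1 \oplus K^0_2$, say $(\{\gamma_i\},\{n_{ij}\},\{c_{ijk}\})$ with $\{\gamma_i\}\in \check C^0(\mathfrak U;\Omega^2_\cB(Z(\calL)))$, $\{n_{ij}\}\in\check C^1(\mathfrak U;\Omega^1_\cB(Z(\calL)))$, $\{c_{ijk}\}\in\check C^2(\mathfrak U;\Omega^0_\cB(Z(\calL)))$ satisfying the three cocycle relations $d_{\bar\alpha}\gamma_i = 0$, $d_{\bar\alpha}n_{ij} = (\delta\gamma)_{ij}$, $d_{\bar\alpha}c_{ijk} = (\delta n)_{ijk}$, $\delta c = 0$, I would define a new lifting triple by
\[
\alpha_i' = \alpha_i^0,\qquad \rho_i' = \rho_i^0 + \gamma_i,\qquad \phi_{ij}' = \phi_{ij}^0 + n_{ij} + c\text{-correction},
\]
where the point is that adding $Z(\calL)$-valued terms to $\rho$ and $\phi$ keeps condition~(ii) of a lifting triple intact and only shifts the associated cochain. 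By Lemma~\ref{lemma_b}(2) each $[\cdot,\cdot]_{\alpha_i^0,\rho_i^0+\gamma_i}$ is a local Lie algebroid extension (the obstruction $\lambda_i$ is unchanged since $\lambda_{(\alpha,\rho+\gamma)}-\lambda_{(\alpha,\rho)} = d_{\bar\alpha}\gamma_i = 0$), and I would use the maps $\phi_{ij}^0 + n_{ij}$ together with the $c_{ijk}$ to glue these into a global extension $\cA_0 + [(\gamma,n,c)]$, exactly as in the proof of the previous Proposition; the cocycle relations on $(\gamma,n,c)$ are precisely what is needed for the gluing.

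Next I would verify well-definedness and freeness of the action. For well-definedness: if $(\gamma,n,c) = d_{\mathrm{tot}}(e,f)$ for $\{e_i\}\in\check C^0(\mathfrak U;\Omega^1_\cB(Z(\calL)))$ and $\{f_{ij}\}\in\check C^1(\mathfrak U;\Omega^0_\cB(Z(\calL)))$, then I would exhibit an explicit isomorphism of the glued extension with $\cA_0$, built from the $e_i$ on each patch (a gauge transformation $(b,\ell)\mapsto(b,\ell+e_i(b))$ of the local models, compatible with the $\phi_{ij}$ via the $f_{ij}$); this is the \v{C}ech-bicomplex analogue of the last step of Lemma~\ref{lemma_b}(3). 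For transitivity and freeness: given two extensions $\cA$ and $\cA'$ with the same coupling, I would choose compatible systems of local splittings, form their lifting triples, and — using that $\alpha'_i - \alpha_i = \ad_{\eta_{i}}$ locally — define $\gamma_i = \rho'_i - \rho_i - d_{\alpha_i}\eta_i - \tfrac12[\eta_i,\eta_i]$ as in \eqref{rho_phi}, together with matching $n_{ij}$ and $c_{ijk}$ measuring the discrepancy of the $\phi_{ij}$'s; one checks this triple is $d_{\mathrm{tot}}$-closed, takes values in $Z(\calL)$, and that its class is independent of the choices and of the chosen cover (by passing to common refinements), yielding a well-defined element of $\mathbb{H}^2$ whose action carries $\cA$ to $\cA'$. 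Combined with the previous Proposition these show the action is transitive, and the well-definedness computation shows it is free, so the set of isomorphism classes is a torsor over $\mathbb{H}^2\big(X;\tau^{\geq 1}\Omega^\bullet_\cB(Z(\calL))\big)$.

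The main obstacle I anticipate is bookkeeping rather than conceptual: one must track the three \v{C}ech-degree components of every cochain simultaneously and check that the total differential $d_{\mathrm{tot}} = \delta \pm d_{\bar\alpha}$ interacts correctly with the quadratic terms $\tfrac12[\phi,\phi]$ appearing in $t_{ij}$ and with the gluing condition $\delta\phi = q$. In particular, verifying that changing the local splittings (equivalently, acting by an element of $\check C^0(\mathfrak U;\Omega^1_\cB(\calL)) \oplus \check C^1(\mathfrak U;\Omega^0_\cB(\calL))$, cf.\ the Lemma preceding this Proposition) changes the associated $2$-cochain exactly by a total coboundary requires the same careful use of Lemma~\ref{lem_change_d} and the derivation property of $d_{\alpha_i}$ that appeared in the proof of Proposition~\ref{cocycle}. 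Once that compatibility is established, the torsor axioms follow formally, and I would also remark that independence of the good cover $\mathfrak U$ follows because refinement induces isomorphisms on the hypercohomology and is compatible with all the constructions above.
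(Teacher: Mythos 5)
Your overall strategy coincides with the paper's: fix local splittings of a reference extension, let a degree-two hypercohomology representative act on the associated lifting triple by adding its components to the $\rho_i$ and the $\phi_{ij}$, glue the resulting local brackets, and, for transitivity and freeness, form the $Z(\calL)$-valued difference data $\gamma_i = \rho'_i-\rho_i-d_{\alpha_i}\eta_i-\tfrac12[\eta_i,\eta_i]$ and $\psi_{ij}=\phi'_{ij}-\phi_{ij}$ as in \eqref{rho_phi}. In that sense the proposal is on target, and it is even more explicit than the paper about well-definedness and freeness.

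There is, however, one concrete error: you misidentify the total complex of the \emph{truncated} complex $\tau^{\geq 1}\Omega^\bullet_\cB(Z(\calL))$. Since the truncation deletes the row $q=0$, one has $T^2=K^2_0\oplus K^1_1$ and $T^1=K^1_0$: there is no component $\{c_{ijk}\}\in\check C^2\big(\mathfrak U;\Omega^0_\cB(Z(\calL))\big)$ among the $2$-cochains, and no $\{f_{ij}\}\in\check C^1\big(\mathfrak U;Z(\calL)\big)$ among the $1$-cochains. These extra components are not harmless bookkeeping. First, the proposed ``$c$-correction'' of $\phi_{ij}$ is not even type-consistent ($c_{ijk}$ is a $0$-form on triple overlaps, while $\phi_{ij}$ is a $1$-form on double overlaps), and there is no mechanism by which $c_{ijk}$ could enter the gluing: the gluing maps $(b,\ell)\mapsto(b,\ell+\phi^0_{ij}(b)+n_{ij}(b))$ are determined on double overlaps and satisfy the cocycle condition on triple overlaps precisely because $\delta\phi^0=0$ (for splittings of an honest extension, $\phi^0_{ij}=s_j-s_i$) and $\delta n=0$; the latter is exactly the $K^1_2$-component of closedness in the truncated complex, whereas with your untruncated conditions ($\delta n=\pm\,d_{\bar\alpha}c$) the local extensions would in general fail to glue. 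Second, admitting the extra $f_{ij}$ among the coboundaries changes which pairs $(\gamma,\psi)$ act trivially, so your argument as written would produce a torsor over the hypercohomology of the \emph{untruncated} complex, which in general differs from $\mathbb H^2\big(X;\tau^{\geq 1}\Omega^\bullet_\cB(Z(\calL))\big)$ — and the truncation is precisely the point of the statement. Once you delete the $q=0$ row, so that a $2$-cocycle is just a pair $(\{\gamma_i\},\{\psi_{ij}\})$ with $d_{\bar\alpha}\gamma_i=0$, $(\delta\gamma)_{ij}=d_{\bar\alpha}\psi_{ij}$, $\delta\psi=0$, and a $1$-cochain is just $\{e_i\}$, your argument becomes the paper's proof.
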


\begin{proof}
Let an extension $\cA$ be given, choose a collection of local splittings $s_i:\cB_{U_i} \to \cA_{|U_i}$, and construct the corresponding lifting triple $(\{\alpha_{s_i}\},\{\rho_{s_i}\},\{\phi_{s_{ij}}\})$ as in the proof of the previous Proposition. Let $(\{\gamma_i\},\{\psi_{ij}\})$ be a representative of a class in $\mathbb{H}^2 \big( X; \tau^{\geq 1}\Omega^\bullet_\cB(Z(\calL)) \big)$, with $\{\gamma_i\} \in K^{2}_0$ and $\{\psi_{ij}\} \in K^{1}_1$. Then, since both $\gamma_i$ and $\psi_{ij}$ take values in $Z(\calL)$, also $(\alpha_{s_i} , \rho_{s_i} + \gamma_i , \phi_{s_{ij}} + \psi_{ij})$ is a lifting triple of $\bar{\alpha}$, and as $(\{\gamma_i\},\{\psi_{ij}\})$ is closed in $T^2$, the brackets $[\cdot,\cdot]_{\alpha_{s_i, \rho_{s_i} + \gamma_i}}$ define Lie algebroid extensions of $\cB_{|U_i}$ by $\calL_{|U_i}$ that glue with the $\phi_{s_{ij}} + \psi_{ij}$ to yield a global Lie algebroid extension of $\cB$ by $\calL$.

Conversely, given two extensions $\cA$ and $\cA'$, construct   lifting triples $(\{\alpha_{s_i}\},\{\rho_{s_i}\},\{\phi_{s_{ij}}\})$ and $(\{\alpha'_{s'_i}\},\{\rho'_{s'_i}\},\{\phi'_{s'_{ij}}\})$ out of a collection of local splittings as before. Choosing $\eta_i \in \Omega^1_\cB(Z(\calL))$ such that $\alpha'_{s'_i} - \alpha_{s_i} = \ad_{\eta_i}$, we obtain  that
\begin{eqnarray}
\gamma_i  &=& \rho'_{s'_i} - \rho_{s_i} -  d_{\alpha_{s_i}} \eta_i - \frac{1}{2}[\eta_i,\eta_i],\nonumber\\
\psi_{ij} &=& \phi'_{s'_{ij}} - \phi_{s_{ij}}\nonumber
\end{eqnarray}
take values in $Z(\calL)$ and   satisfy the necessary conditions to define a class in the hypercohomology group $\mathbb{H}^2 \big( X; \tau^{\geq 1}\Omega^\bullet_\cB(Z(\calL)) \big)$.
\end{proof}

\bigskip\section{A spectral sequence}\label{spectral}

Let $X$ be a  complex manifold. Given an extension of Lie algebroids on $X$ as in eq.~\eqref{ext}, 
one can   construct a spectral sequence which converges to the hypercohomology $\mathbb H^\bullet(X;\Omega_{ \cA}^\bullet)$.  
This is a generalization of thespectral sequence associated to an inclusion of Lie algebras $\mathfrak h \subseteq \mathfrak g$ as given in  \cite{Hoch-Serre53}. This result was generalized to the case of Lie-Rinehart algebras in \cite{Roub80}, while the case of smooth Lie algebroids was treated in \cite{mackenzie}, Chapter 7.4.
We explicitly compute the first  and   second term of this spectral sequence.

We consider an exact sequence of holomorphic Lie algebroids as in equation \eqref{ext}. We shall assume that both $\cB$ and $\calL$ are locally free $\cO_X$-modules.
\begin{defin} \label{filtration}
For $p=0,\dots,q$, we  define $F^p \Omega^q_\cA$ as the subsheaf of $\Omega_\cA^q$ whose sections are annihilated by the inner product with
$q-p+1$ sections of $\mathcal L $. 
\end{defin}

Note that $F^0 \Omega_\cA^q = \Omega_\cA^q$, $F^q\Omega_\cA^q=\Omega^q_{\cB} $, $F^{q+1} \Omega_\cA^q=0$.

In this way a decreasing filtration of the complex $\Omega_\cA^\bullet$ is defined, which, according to  \cite[Ch.~0, 13.6.4]{EGA3-I},   induces a filtration on the complex that computes the hypercohomology of the complex $\Omega_\cA^\bullet$. For the sake of clarity we give here the details of this construction.
      
\begin{lemma} $\operatorname{gr}_p \Omega^{p+q}_\cA := F^p \Omega^{p+q}_\cA/F^{p+1} \Omega^{p+q}_\cA \simeq \Omega^p_{\cB}\otimes \Omega ^q_{\mathcal L}$. 
 \label{iso}
\end{lemma}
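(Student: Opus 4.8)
The plan is to identify $\operatorname{gr}_p \Omega^{p+q}_\cA$ with $\Omega^p_\cB \otimes \Omega^q_\calL$ by working fibrewise (or, equivalently, locally over an open set where $\cA$ splits as $\cB \oplus \calL$ as $\cO_X$-modules) and checking that the resulting identification is independent of the splitting, hence glues to a global isomorphism of $\cO_X$-modules. Since $\cB$ and $\calL$ are locally free, the dual sequence gives $\cA^\ast \simeq \cB^\ast \oplus \calL^\ast$ locally, and $\Lambda^{p+q}\cA^\ast$ decomposes as $\bigoplus_{a+b=p+q} \Lambda^a\cB^\ast \otimes \Lambda^b\calL^\ast$. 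The first step is to reinterpret Definition \ref{filtration}: a section $\xi \in \Omega^{p+q}_\cA$ lies in $F^p$ precisely when contracting with any $q+1$ sections of $\calL$ kills it, which in the local decomposition means that the components $\Lambda^a\cB^\ast \otimes \Lambda^b\calL^\ast$ with $b \geq q+1$ vanish; so $F^p\Omega^{p+q}_\cA \simeq \bigoplus_{b \leq q} \Lambda^{p+q-b}\cB^\ast \otimes \Lambda^b\calL^\ast$ locally, and $F^{p+1}$ is the same sum with $b \leq q-1$.

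Next I would take the quotient: $\operatorname{gr}_p\Omega^{p+q}_\cA$ is locally the single summand $\Lambda^p\cB^\ast \otimes \Lambda^q\calL^\ast = \Omega^p_\cB \otimes \Omega^q_\calL$. To make this canonical, I would describe the isomorphism intrinsically, without reference to the splitting: the surjection $\cA \to \cB$ induces $\cB^\ast \hookrightarrow \cA^\ast$, hence $\Omega^p_\cB = \Lambda^p\cB^\ast \hookrightarrow \Lambda^p\cA^\ast$; wedging and using that $F^p/F^{p+1}$ only remembers the ``top $\calL$-degree'' part, one gets a well-defined pairing $\Omega^p_\cB \otimes \Lambda^q\calL^\ast \to \operatorname{gr}_p\Omega^{p+q}_\cA$. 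For the inverse, given $\xi \in F^p\Omega^{p+q}_\cA$, restrict the multilinear alternating form $\xi$ to $q$ arguments in $\calL$ and $p$ arguments in $\cA$; the $F^p$ condition together with $\cO_X$-linearity in the $\calL$-slots forces this restriction to descend to an element of $\Lambda^p\cB^\ast \otimes \Lambda^q\calL^\ast$ (the dependence on the $\cA$-arguments is only through their images in $\cB$ modulo $F^{p+1}$), and modifying $\xi$ by an element of $F^{p+1}$ changes nothing. One then checks the two maps are mutually inverse, which is immediate on the local model.

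The main obstacle, such as it is, is purely bookkeeping: verifying that the ``restrict to $q$ $\calL$-arguments and $p$ $\cB$-arguments'' map is well-defined modulo $F^{p+1}$ and genuinely $\cO_X$-linear in all slots, i.e.\ that the contraction really does factor through $\cB$ in the last $p$ arguments. This is where the precise form of Definition \ref{filtration} is used. Everything else is the standard decomposition $\Lambda^n(E\oplus F) = \bigoplus_{a+b=n}\Lambda^aE \otimes \Lambda^bF$ for locally free modules, applied to the (locally split) dual sequence, plus the remark that the induced maps patch because they were defined intrinsically. I would present the argument by first writing the local model explicitly, then giving the intrinsic description of the isomorphism and noting it agrees with the local one, so that gluing is automatic.
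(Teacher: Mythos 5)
Your argument is correct and essentially coincides with the paper's proof: the intrinsic map you describe (evaluate on $p$ lifts of sections of $\cB$ and $q$ sections of $\calL$, with the $F^p$ condition guaranteeing independence of the lifts and kernel equal to $F^{p+1}$) is exactly the morphism $j$ used in the paper, and surjectivity is likewise obtained there from the local splitting $F^p\Omega^k_\cA\simeq\bigoplus_{p'\geq p}\Omega^{p'}_\cB\otimes\Omega^{k-p'}_\calL$ available since $\cB$ and $\calL$ are locally free. No substantive difference in route.
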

\begin{proof} We define a morphism:
\[
j\colon  F^p \Omega^{p+q}_\cA \to  \Omega^p_{\cB}\otimes \Omega^q_{\mathcal L}
\]
by letting
\[
j(\omega) (b_1,\dots,b_p;l_1,\dots,l_q) = \omega(\bar{b}_1,\dots,\bar{b}_p,l_1,\dots,l_q),
\]
where $\omega$ is a section of $F^p \Omega^{p+q}_\cA$, $b_1,\ldots,b_p$ are sections of $\cB$, $l_1,\ldots l_q$ are sections of $\calL$, and   $\bar{b}_i$ is any section of $\cA$ whose image in $\cB$ is $b_i$. Since $\omega \in F^p \Omega^{p+q}_\cA$, this definition does not depend on the choice of $\bar{b}_i$.
The kernel of $j$ coincides with $F^{p+1} \Omega^{p+q}_\cA$,  and  this map is surjective. In fact, since both $\cB$ and $\calL$ are locally free, for all $k,p$, locally we can always find an isomorphism $F^p \Omega^{k}_\cA\simeq \bigoplus_{p'\geq p} \Omega_\cB^{p'} \otimes \Omega_\calL^{k-p'}$. 
\end{proof}
       
\begin{thm}   \label{mainDR} The filtration $F^\bullet\Omega_\cA^\bullet$ induces a spectral sequence which converges to the hypercohomology
$\mathbb H(X;\Omega_\cA^\bullet)$.The first term of this spectral sequence is 
\begin{equation} E_1^{p,q}   \simeq  \mathbb H^{q}(X, \Omega^p_{\cB}\otimes \Omega^\bullet_{\mathcal L})\,.
\label{E1} \end{equation}
\end{thm}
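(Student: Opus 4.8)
The statement is a standard ``filtered complex of sheaves gives a spectral sequence'' assertion, with the extra content being the identification of the $E_1$ term. The plan is therefore to set up the spectral sequence via a \v Cech--type double complex computing the hypercohomology, carefully track the induced filtration, and then read off $E_1$ from Lemma \ref{iso}.

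\emph{Step 1: the spectral sequence of the filtered complex.} Choose a good open cover $\mathfrak U = \{U_i\}$ of $X$ as in Section \ref{extensions} (each multiple intersection a connected Stein manifold), and form the \v Cech double complex $C^{p,q} = \check C^p(\mathfrak U; \Omega_\cA^q)$, whose total complex computes $\mathbb H^\bullet(X;\Omega_\cA^\bullet)$. Since the sheaf filtration $F^\bullet\Omega_\cA^\bullet$ of Definition \ref{filtration} is a filtration by subcomplexes, it induces a filtration on this total complex, $F^s\, \mathrm{Tot}^n = \bigoplus_{p+q=n}\check C^p(\mathfrak U; F^s\Omega_\cA^q)$, which is exhaustive and bounded in each degree (because $F^{q+1}\Omega_\cA^q = 0$ and $F^0\Omega_\cA^q = \Omega_\cA^q$). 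A filtered complex that is bounded in this sense yields a convergent spectral sequence by the standard construction (this is precisely the reference to \cite[Ch.~0, 13.6.4]{EGA3-I} invoked in the text). One should note that \v Cech cohomology on the good cover computes sheaf hypercohomology here because the sheaves $\Omega_\cA^q$, and more importantly the subquotients appearing below, are coherent and the cover is Stein, so higher \v Cech cohomology on the intersections agrees with derived-functor cohomology.

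\emph{Step 2: identification of $E_0$ and $E_1$.} By construction the $E_0$ page is $E_0^{s,t} = \mathrm{gr}_s\, \mathrm{Tot}^{s+t}$, and reindexing the \v Cech degree, $E_0^{s,t} = \bigoplus_{a+b = s+t}\check C^a(\mathfrak U;\, \mathrm{gr}_s\Omega_\cA^b)$ with $E_0$-differential the one induced by $d_\cA$ on the associated graded together with the \v Cech differential; since $d_\cA$ lowers the filtration-preserving part, on $\mathrm{gr}_s\Omega_\cA^b$ it acts as the differential of the complex $\mathrm{gr}_s\Omega_\cA^\bullet$, whose degree-$b$ term is $\mathrm{gr}_s\Omega_\cA^b \simeq \Omega^s_\cB\otimes\Omega^{b-s}_\calL$ by Lemma \ref{iso}. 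Thus for fixed $s$, the column $E_0^{s,\bullet}$ is the \v Cech double complex of the complex of sheaves $\Omega^s_\cB\otimes\Omega^\bullet_\calL$, whose total cohomology is exactly $\mathbb H^{q}(X, \Omega^p_\cB\otimes\Omega^\bullet_\calL)$ with $p = s$ and $q$ the complementary degree. Taking cohomology in the $E_0$-differential therefore gives $E_1^{p,q}\simeq\mathbb H^{q}(X,\Omega^p_\cB\otimes\Omega^\bullet_\calL)$, which is \eqref{E1}. Here one must be a little careful that the complex $\Omega^p_\cB\otimes\Omega^\bullet_\calL$ carries a genuine differential: $\calL$ is totally intransitive, so its anchor is zero and $d_\calL$ is $\cO_X$-linear, hence $d_\calL$ extends to $\mathrm{id}\otimes d_\calL$ on $\Omega^p_\cB\otimes\Omega^\bullet_\calL$; this is the induced differential on $\mathrm{gr}_p\Omega_\cA^\bullet$, and it is precisely what makes the right-hand side of \eqref{E1} well-defined.

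\emph{Main obstacle.} The routine part is the abstract machinery of filtered complexes; the point requiring genuine care is verifying that the $E_0$-differential on $\mathrm{gr}_p\Omega_\cA^\bullet$ is exactly $\mathrm{id}\otimes d_\calL$ under the isomorphism of Lemma \ref{iso}, i.e. checking that the cross-terms in the formula for $d_\cA$ involving brackets $[b,l]$ and the anchor $a(b)$ all either preserve the filtration (and hence die in the associated graded) or vanish because $a(\calL) = 0$. Concretely, when one expands $(d_\cA\omega)(\bar b_1,\dots,\bar b_p, l_1,\dots,l_{q+1})$ for $\omega\in F^p$, the terms $a(l_i)(\cdots)$ vanish, the terms $[\,l_i,l_j\,]$ reproduce $d_\calL$, and the terms involving $\bar b_i$ — namely $a(\bar b_i)(\cdots)$ and $[\bar b_i,\bar b_j]$, $[\bar b_i,l_j]$ — land in $F^{p+1}$ and so are killed modulo $F^{p+1}$. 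This compatibility, combined with the Stein/good-cover hypothesis ensuring \v Cech computes hypercohomology, is the crux; everything else is bookkeeping about indices and the convergence criterion of \cite[Ch.~0, 13.6.4]{EGA3-I}.
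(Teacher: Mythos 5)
Your proof is correct, but it takes a different route from the paper's official argument. The paper proves Theorem \ref{mainDR} by choosing filtered \emph{injective} resolutions $F^p\mathcal C^{q,\bullet}\subseteq \mathcal C^{q,\bullet}$ of the filtered sheaves (following \cite{Tennison} and \cite[Ch.~0, 13.6.4]{EGA3-I}), filtering the total complex of global sections, and using injectivity (Lemma \ref{quotient}: $H^1(X,F^{\ell+1}\mathcal C^{p,q})=0$) to identify the associated graded of that filtration with global sections of $\gr_\ell\mathcal C^{p,q}$, whence $E_1^{\ell,k}\simeq\mathbb H^k(X,\gr_\ell\Omega_\cA^\bullet)$ and then Lemma \ref{iso}. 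You instead filter the \v Cech total complex on a good Stein cover; there the role of injectivity is played by coherence plus Cartan's Theorem B on the intersections, which you need twice: once so that \v Cech computes hypercohomology, and once (this is the analogue of Lemma \ref{quotient}, and would deserve one explicit sentence) so that $\check C^a(\mathfrak U;F^s\Omega_\cA^b)\to\check C^a(\mathfrak U;\gr_s\Omega_\cA^b)$ is surjective, i.e.\ that the graded of your filtered \v Cech complex really is the \v Cech complex of $\gr_s\Omega_\cA^\bullet$; note that coherence of $F^s\Omega_\cA^b$ and of the graded pieces uses the standing local freeness of $\cB$ and $\calL$. What your route buys is concreteness: it meshes directly with the explicit computation of $d_1$ later in the section, and indeed the paper itself redoes the theorem this way afterwards (the ``hands-on proof'' via the \v Cech double complex and local splittings $\mathfrak s$), though it does so by splitting $\Omega_\cA^k$ rather than by working with the graded quotients as you do; the paper's injective-resolution proof, by contrast, is cover-independent and needs no Stein or coherence input. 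One small point in your ``main obstacle'' paragraph: when you evaluate $j(d_\cA\omega)$ for $\omega\in F^p$, the terms involving $a(\bar b_i)$, $[\bar b_i,\bar b_j]$ and $[\bar b_i,l_j]$ do not merely land in $F^{p+1}$ --- they vanish outright, because in each such term $\omega$ is being contracted with $q+1$ sections of $\calL$ (here one uses that $\calL$ is an ideal, so $[\bar b_i,l_j]\in\calL$); either way the conclusion that the $E_0$-differential is $\id\otimes d_\calL$ is correct, and making this verification explicit is a useful addition, since the paper only asserts it in the sentence following the theorem.
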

The differential of the complex in the r.h.s.\ of this equation is the differential $d_\calL$ induced by the trivial action of $\calL$ on $\Omega_\cB^p$. 

We make some preparations for the proof of Theorem \ref{mainDR}. By standard homological constructions (see e.g.\ \cite{Tennison}),  one can introduce injective resolutions $\mathcal C^{q,\bullet }$ of $\Omega^q_\cA$, and $F^p\mathcal C^{q,\bullet }$ of $F^p\Omega_\cA^q$, such that $F^p\mathcal C^{q,\bullet }$ is a filtration of $\mathcal C^{q,\bullet }$, and $\gr_p \mathcal C^{q,\bullet}:= F^p\mathcal C^{q,\bullet }/ F^{p+1}\mathcal C^{q,\bullet }$ is an injective resolution of $\gr_p \Omega^q_\cA$.
We consider the total complex 
\[
T^k = \bigoplus_{p+q=k} \Gamma(X,\mathcal C^{p,q})
\]
whose cohomology is the hypercohomology of $\Omega_\cA^\bullet$. Its descending filtration is defined by
\[       
F^\ell T^k = \bigoplus_{p+q=k} \Gamma(X, F^\ell \mathcal C^{p,q}).
\]
       
\begin{lemma} \label{quotient} 
One has an isomorphism
 \[
 \Gamma(X,F^\ell \mathcal C^{p,q}) /
 \Gamma(X,F^{\ell +1}\mathcal C^{p,q})\simeq \Gamma(X,\operatorname{gr}_\ell \mathcal C^{p,q}).
 \]
 \end{lemma}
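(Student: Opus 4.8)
The plan is to exploit the key structural property built into the resolutions $\mathcal C^{p,\bullet}$, namely that $\gr_\ell \mathcal C^{p,q} = F^\ell \mathcal C^{p,q}/F^{\ell+1}\mathcal C^{p,q}$ is, term by term, an \emph{injective} $\cO_X$-module (being an injective resolution of $\gr_\ell\Omega_\cA^p$). First I would write down, for each fixed $(p,q)$, the short exact sequence of sheaves
\[
0 \to F^{\ell+1}\mathcal C^{p,q} \to F^{\ell}\mathcal C^{p,q} \to \gr_\ell \mathcal C^{p,q} \to 0.
\]
Applying the global sections functor $\Gamma(X,-)$ yields the exact sequence
\[
0 \to \Gamma(X, F^{\ell+1}\mathcal C^{p,q}) \to \Gamma(X, F^{\ell}\mathcal C^{p,q}) \to \Gamma(X, \gr_\ell \mathcal C^{p,q}) \to H^1\!\big(X, F^{\ell+1}\mathcal C^{p,q}\big),
\]
so the claimed isomorphism is equivalent to the vanishing $H^1\!\big(X, F^{\ell+1}\mathcal C^{p,q}\big) = 0$, i.e.\ to the surjectivity of $\Gamma(X,F^\ell\mathcal C^{p,q}) \to \Gamma(X,\gr_\ell\mathcal C^{p,q})$ on global sections.

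The second step is to establish that vanishing. Since $F^\bullet$ is a finite filtration of $\mathcal C^{p,q}$ (recall $F^{p+1}\Omega_\cA^p = 0$, so the filtration has only finitely many steps), $F^{\ell+1}\mathcal C^{p,q}$ is a finite successive extension of the injective sheaves $\gr_m\mathcal C^{p,q}$ for $m \geq \ell+1$. Injective $\cO_X$-modules are flasque, hence acyclic for $\Gamma(X,-)$; a finite extension of $\Gamma$-acyclic sheaves is again $\Gamma$-acyclic, by the long exact cohomology sequence and induction on the number of steps in the filtration. Therefore $H^i(X, F^{\ell+1}\mathcal C^{p,q}) = 0$ for all $i>0$, in particular for $i=1$, which gives the desired surjectivity and completes the argument.

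I expect the main (and only real) obstacle to be making precise that the resolutions produced by the "standard homological constructions" invoked before Theorem \ref{mainDR} genuinely have the property that each $\gr_\ell\mathcal C^{p,q}$ is injective, not merely that $\gr_\ell\mathcal C^{p,\bullet}$ is \emph{some} injective resolution; but this is exactly what was stipulated in the paragraph preceding the lemma, so it may be quoted directly. A minor alternative, avoiding even the extension argument, is to observe that the filtration of $\mathcal C^{p,q}$ by injective subsheaves is automatically split as a filtration of $\cO_X$-modules in each degree (an injective submodule is a direct summand), so $F^\ell\mathcal C^{p,q} \cong \bigoplus_{m\geq\ell}\gr_m\mathcal C^{p,q}$ and the quotient $F^\ell/F^{\ell+1}$ visibly equals $\gr_\ell$ even before taking sections, whence taking $\Gamma(X,-)$ of a direct sum commutes with the projection. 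Either route is routine; I would present the flasque/acyclicity version as it is the most robust and does not rely on splitting the filtration.
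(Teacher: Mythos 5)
Your proposal is correct and follows essentially the same route as the paper: the paper's proof consists precisely of the short exact sequence $0 \to F^{\ell+1}\mathcal C^{p,q} \to F^{\ell}\mathcal C^{p,q} \to \gr_\ell \mathcal C^{p,q} \to 0$, taking global sections, and noting $H^1(X, F^{\ell+1}\mathcal C^{p,q})=0$. The only (immaterial) difference is that the paper gets this vanishing directly from the stipulation that $F^{\ell+1}\mathcal C^{p,\bullet}$ is itself an injective (hence flasque, hence acyclic) resolution, whereas you rederive it by induction from the injectivity of the graded pieces.
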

 \begin{proof} As $F^p \mathcal C^{q,\bullet }/ F^{p+1}\mathcal C^{q,\bullet } \simeq 
 \gr_\ell \mathcal C^{p,q}$ one has the exact sequences
\[ 
 0 \to   \Gamma(X, F^{\ell +1}\mathcal C^{p,q} ) \to \Gamma(X, F^\ell \mathcal C^{p,q}) \to \Gamma(X, \gr_\ell\mathcal C^{p,q}) \to 
H^1(X, F^{\ell +1} \mathcal C^{p,q} )=0.
\]
\end{proof}
        
 \noindent{\em Proof of Theorem \ref{mainDR}.}
 As a consequence of Lemma \ref{quotient}, the zeroth  term of the spectral sequence given by the  filtration $F^\ell T^k$  is
 \[
 E_0^{\ell,k}  = \bigoplus_{p+q=k+\ell} \Gamma(X, \gr_\ell\mathcal C^{p,q}) .
 \]
 Recalling that the differential $d_0 \colon E_0^{\ell,k} \to E_0^{\ell,k+1}$ is induced by the differential of the complex $\Omega_\cA^\bullet$, we obtain 
 \[
 E_1^{\ell,k} \simeq \mathbb H^k(X,F^\ell \Omega_\cA^{\bullet} / F^{\ell + 1} \Omega_\cA^{\bullet}).
 \]
By plugging in the isomorphism proved in Lemma \ref{iso}, we get   Equation \eqref{E1}. \qed

 \begin{remark}  \label{local_ss} (i) One can also consider the spectral sequence associated with the filtration of the complex of sheaves $F^\bullet \Omega^\bullet_\cA$. In this case the first terms of the spectral sequence are the sheaves 
\[
 \mathscr E^{p,q}_0 = \Omega^p_\cB \otimes \Omega_\calL^q,  \quad\mathscr E_1^{p,q} = \Omega^p_\cB \otimes \mathscr G^q,
\]
where  $\cG^q$  are the cohomology sheaves  of the  complex $\Omega^\bullet_{\mathcal L}$.

\noindent
Note that the $\mathscr E^{p,q}_1$ are the sheaves associated with the presheaves
\[
U \rightsquigarrow \mathbb H^q(U,\Omega^p_{\cB}\otimes\Omega^\bullet_{\mathcal L}),
\]
but $E_1^{p,q}$ is not the vector space of global sections of $\mathscr E_1^{p,q}$.
 
(ii) Note that given the extension \eqref{ext}, we have the coupling $\bar{\alpha}: \cB \to \Out_\calD (\calL)$. This induces a representation of $\cB$ on the sheaves $\cG^q$, so that we can consider the $\cB$-forms with values in $\cG^q$. These sheaves form a complex $(\Omega_\cB^\bullet(\cG^q),d_\cB)$ and by the previous item, $\mathscr E_1^{p,q} = \Omega^p_\cB(\cG^q)$. One checks that the differential $d_1$ of the spectral sequence coincides with $d_\cB$ (cf. \cite{Hoch-Serre53} and \cite{mackenzie} for details). It follows that the   second term of the spectral sequence is formed by the cohomology sheaves 
\[
\mathscr E_2^{p,q} = \mathscr H^p(\Omega_\cB^\bullet (\cG^q), d_\cB).
\]
However, a description of the  differential $d_1$  of term $E_1^{p,q}$ will need a finer study, as it cannot be recovered directly from   $d_1$ of $\mathscr E_1^{p,q}$. The next section will be devoted to this goal.
\end{remark}
 
 \begin{example} 
 Given a Lie algebroid $\cA$, denote by $\cI$ the image of the anchor $a$ and by $\mathcal N$ the kernel of $a$.  One gets an extension 
 \[
 0 \to\mathcal N \to \cA \to \mathcal I \to 0
 \]
 of the type \eqref{ext}. Then a spectral sequence is intrinsically associated to the Lie algebroid $\cA$. Note that in general $\mathcal I$ and $\mathcal N$ are not locally free $\cO_X$-modules, but the definition of the filtration on $\cA$ makes sense also in this case, so that we have indeed a spectral sequence associated to any Lie algebroid.
 
Consider now the sheaves $\mathcal G^q$, the cohomology sheaves of the complex o sheaves $\Omega_{\mathcal N}^\bullet$ (cf. Remark \ref{local_ss}). The differential $d_1$ of the local spectral sequence $\mathscr E_1^{p,q}$ is a differential
\[
d_1: \Omega_{\mathcal I}^p \otimes \mathcal G^q \to \Omega_{\mathcal I}^{p+1} \otimes \mathcal G^q\ ,
\]
which, motivated by the next example, can be interpreted as a Gauss-Manin connection on the sheaves $\mathcal G^q$.
\end{example}

\begin{example}
Let $X,Y$ be complex manifolds and $p:X\to Y$ a submersion. Then we have the exact sequence
\[
0 \to T_p \to T_X \to p^*T_Y \to 0
\]
of vector bundles over $X$. Here $T_p$ is the sub-Lie algebroid of $T_X$ whose sections are the vector fields tangent to the fibres of $p$.
One introduces the sheaves $\mathcal H^q_{DR}(X/Y) = R^q p_* \Omega^\bullet_{X/Y}$. The Gauss-Manin connection associated to the morphism $p$ is a flat connection
\[
\nabla_{GM}: \mathcal H^q_{DR}(X/Y) \otimes \Omega_Y^p \to \mathcal H^q_{DR}(X/Y) \otimes \Omega_Y^{p+1}\ .
\]
In \cite{katz-oda} it is shown that this connection is the $d_1$ differential of the spectral sequence defined as follows. Consider the filtration defined in \ref{filtration} with $\cA= T_X$ and $\calL = T_p$. Note that $T_p$ is not an ideal in $T_X$, however,   Definition \ref{filtration} makes sense for any $\calL$ sub-Lie algebroid of $\cA$. Applying the derived functor $R p_*$ to the filtered complex of sheaves $F^\bullet \Omega_X^\bullet$, by the construction of \cite[Ch.~0, 13.6.4]{EGA3-I}, we obtain a spectral sequence converging to $R^\bullet p_*(\Omega^\bullet_X)$. The $E_1^{p,q}$ term of this spectral sequence is isomorphic to $\mathcal H^q_{DR}(X/Y) \otimes \Omega_Y^p$, and the $d_1$ differential coincides with the Gauss-Manin connection.
\end{example}


The rest of this Section is devoted  to the explicit computation 
 of the differential $d_T$ when a collection of local splittings $\mathfrak s$ is given. This will allow us to compute the $d_1$ differential of the spectral sequence, and to understand how the others differentials behave.

\begin{remark}
Let $\Xi \in \Omega_\cB^k \otimes \bigwedge^j \calL$ and $\eta \in \Omega^p_\cB \otimes \Omega^q_\calL$. The duality pairing between $\calL$ and $\Omega^1_\calL$ induces a cup-product $\Xi \smile \eta \in \Omega_\cB^{p+k} \otimes \Omega_\calL^{q-j}$, defined explicitly by the formula
\begin{eqnarray*}
&&(\Xi \smile \eta) (b_1,\ldots , b_{p+k}; l_1 , \ldots , l_{q-j})  =\\
&=& \sum_{\sigma \in \Sigma_{p, k}} (-1)^\sigma \eta \big( b_{\sigma(1)} , \ldots , b_{\sigma(p)} ; \Xi(b_{\sigma(p+1)}, \ldots ,b_{\sigma(p+k)}) , l_1 , \ldots ,l_{q-j} \big).
\end{eqnarray*}
\end{remark}

Let an extension as \eqref{ext} be given and let $s:\cB_{|U} \to \cA_{|U}$ be a local section defined over some open subset $U\subseteq X$. The splitting $s$ induces an isomorphism $\cA_{|U} \iso \cB_{|U} \oplus \calL_{|U}$, which, in turn, defines an isomorphism
\begin{equation} \label{split_forms}
\Omega^k_{\cA|U} \stackrel{s}{\to} \bigoplus_{m=0}^k \Omega_{\cB|U}^{m} \otimes \Omega_{\calL|U}^{k-m}.
\end{equation} 
Given $\xi \in \Omega^k_{\cA|U}$, we shall denote by $s(\xi)$ the corresponding element on the right hand side of (\ref{split_forms}). In particular, we shall write $s(\xi) = (s(\xi)^{0,k},\ldots, s(\xi)^{k,0})$, with $s(\xi)^{m,k-m} \in \Omega_{\cB|U}^m \otimes \Omega_{\calL|U}^{k-m}$. 
\begin{remark}
Observe that $s(\xi)^{m,k-m}$ is defined by the  formula
\begin{equation} \label{eq_spl2}
s(\xi)^{m,k-m}(b_1,\ldots,b_m; l_1,\ldots,l_{k-m}) = \xi(s(b_1), \ldots, s(b_{k-m}), l_1,\ldots, l_m).
\end{equation}
\end{remark}
Moreover, the splitting $s$ induces a $\cB_{|U}$-connection $\alpha_s$ on $\calL_{|U}$ (cf. Section 3). By a standard argument, $\alpha_s$ induces a $\cB_{|U}$-connection on $\calL_{|U}^*$ and on all   exterior powers $\Omega_{\calL|U}^p$. Explicitly,
this action is given as 
\begin{equation} \label{eq_spl4}
(\alpha_s(b) \cdot \eta)(l_1,\ldots l_p) = \alpha_s(b) \big( \eta(l_1,\dots , l_p) \big) + \sum_a (-1)^a \eta(\alpha_s(b)(l_a) , l_1,\ldots , \hat{l}_a , \ldots , l_p)
\end{equation}
for $\eta\in \Omega^p_{\calL|U}$ and $b\in \cB_{|U}$.
From this we obtain the differential 
\[
d_{\alpha_s}: \Omega_{\cB|U}^m \otimes \Omega_{\calL|U}^{k-m} \to \Omega_{\cB|U}^{m+1} \otimes \Omega_{\cB|U}^{k-m},
\]
which satisfies $d_{\alpha_s}^2 = F_{\alpha_s} \smile \bullet$, where $F_{\alpha_s}$ is the curvature of $\alpha_s$ acting on $\Omega_\calL^{k-m}$.

On the other hand, since $\calL$ is totally intransitive, we can consider the trivial $\calL$-action on $\Omega_\cB^m$. In this way we obtain a differential
\[
d_\calL : \Omega_{\cB|U}^m \otimes \Omega_{\calL|U}^{k-m} \to \Omega_{\cB|U}^{m } \otimes \Omega_{\calL|U }^{k-m+1},
\]
which satisfies $d_\calL^2 = 0$.

Now we have:
\begin{lemma} \label{lemma_conti1} For $\xi \in \Omega_{\cA|U}^k$ one has
\[
s(d_\cA \xi)^{m+1,k-m} = d_\calL \big( s(\xi)^{m,k-m} \big) + (-1)^{m+1} d_{\alpha_s} \big( s(\xi)^{m+1,k-m-1} \big) - \rho_s \smile s(\xi)^{m+2, k-m-2} .
\]
\end{lemma}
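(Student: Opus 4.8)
The plan is to prove Lemma~\ref{lemma_conti1} by a direct computation, evaluating both sides on a tuple of sections and using the defining formula~\eqref{eq_spl2} to translate everything into the bracket and anchor data of $\cA$. Fix an open set $U$, a splitting $s$, and a form $\xi\in\Omega_{\cA|U}^k$. To evaluate the component $s(d_\cA\xi)^{m+1,k-m}$ I would feed it $m+1$ sections $b_0,\dots,b_m$ of $\cB_{|U}$ (lifted via $s$ to $\cA_{|U}$) and $k-m$ sections $l_1,\dots,l_{k-m}$ of $\calL_{|U}$, and then expand $(d_\cA\xi)(s(b_0),\dots,s(b_m),l_1,\dots,l_{k-m})$ using the Chevalley--Eilenberg formula for $d_\cA$ from Section~\ref{gen}. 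The resulting sum naturally breaks into pieces according to how the derivation terms and the bracket terms distribute among the $b$'s and the $l$'s: (a) both entries in a bracket are $l$'s, or a derivation $a(l_i)$ acts; (b) one entry of a bracket is $s(b)$ and one is $l$; (c) both entries of a bracket are $s(b)$'s. Since $\calL$ is totally intransitive, $a(l_i)=0$, so type-(a) terms reduce to the intrinsic $d_\calL$ applied to $s(\xi)^{m,k-m}$ in the $\calL$-slots only (with the $b$'s as passive parameters) — this is exactly the first term on the right-hand side.

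Next I would identify the type-(b) terms. A bracket $[s(b_j),l_i]_\cA$ projects to zero in $\cB$ (because $\calL$ is the kernel), so it again lies in $\calL$, and by definition $[s(b_j),l_i]_\cA = \alpha_s(b_j)(l_i)$. Collecting these terms, together with the derivation terms $a(s(b_j))(\xi(\dots))$ acting on the scalar values, reproduces precisely the twisted differential $d_{\alpha_s}$ acting on the component $s(\xi)^{m+1,k-m-1}$ in the $\cB$-slots, with the $l$'s as parameters; the sign $(-1)^{m+1}$ arises from commuting the new $\cB$-entry past the $m$ remaining $\cB$-entries so that it sits in the first slot of $d_{\alpha_s}$, matching the position conventions in~\eqref{eq_spl4}. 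Here I would need to be careful that the $\alpha_s$-action used on $\Omega_\calL^{k-m-1}$ is exactly the induced one from~\eqref{eq_spl4}, which it is, since the Chevalley--Eilenberg terms where $s(b_j)$ brackets an $l_i$ are in bijection with the correction terms in~\eqref{eq_spl4}.

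Finally, the type-(c) terms involve $[s(b_i),s(b_j)]_\cA$, which in general is \emph{not} in the image of $s$; its failure to equal $s([b_i,b_j]_\cB)$ is by definition $\rho_s(b_i,b_j)\in\calL$ (see the construction of $\rho_s$ in Subsection~\ref{LRalgebras}). Writing $[s(b_i),s(b_j)]_\cA = s([b_i,b_j]_\cB) + \rho_s(b_i,b_j)$ splits each type-(c) term into two: the $s([b_i,b_j]_\cB)$ piece, which recombines with the type-(b) and type-(a) pieces to complete $d_{\alpha_s}$ and $d_\calL$ correctly (one must check no leftover terms survive), and the $\rho_s(b_i,b_j)$ piece, which is an element of $\calL$ inserted into $\xi$ alongside the remaining $m-1$ $b$-slots and $k-m$ $l$-slots. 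Comparing signs and the shuffle structure, this last collection is exactly $-\rho_s\smile s(\xi)^{m+2,k-m-2}$ in the cup-product notation of the preceding Remark (the one defining $\Xi\smile\eta$ for $\Xi\in\Omega_\cB^k\otimes\bigwedge^j\calL$). The main obstacle I anticipate is purely bookkeeping: tracking the numerous signs coming from (i) the $(-1)^{i-1}$ and $(-1)^{i+j}$ in the Chevalley--Eilenberg formula, (ii) the reordering needed to move the lifted $\cB$-entries and the $\calL$-entries into the standard slot order dictated by~\eqref{eq_spl2}, and (iii) the shuffle signs in the cup product. I would organize the computation by first writing $d_\cA\xi$ in full, then partitioning the index sums into the three types above, and verifying the sign on each type against the stated right-hand side; no conceptual difficulty is expected beyond this careful accounting.
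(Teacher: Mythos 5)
Your overall strategy coincides with the paper's (implicit) one --- the lemma is stated there without proof, and the proof of Lemma \ref{lemma_conti5} presupposes exactly this computation: evaluate $d_\cA\xi$ on lifted sections of $\cB$ and on sections of $\calL$, use that the anchor vanishes on $\calL$, that $[s(b),l]=\alpha_s(b)(l)$, and that $[s(b_i),s(b_j)]=s([b_i,b_j])+\rho_s(b_i,b_j)$, and regroup the Chevalley--Eilenberg terms into a $d_\calL$ piece, a $d_{\alpha_s}$ piece and a $\rho_s\smile\bullet$ piece. That three-way regrouping is correct and is all the conceptual content there is.

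However, the term-by-term identifications you assert do not typecheck against your own setup, and since this lemma is nothing but bookkeeping, that is a genuine gap rather than a cosmetic one. You evaluate $s(d_\cA\xi)^{m+1,k-m}$ on $m+1$ sections $b_0,\dots,b_m$ of $\cB$ and $k-m$ sections of $\calL$, i.e.\ you read the first superscript as the $\cB$-degree. Under that reading: the brackets $[l_i,l_j]$ leave $\xi$ evaluated on $m+1$ lifted $\cB$-sections and $k-m-1$ sections of $\calL$, so your type (a) assembles into $d_\calL\big(s(\xi)^{m+1,k-m-1}\big)$, not $d_\calL\big(s(\xi)^{m,k-m}\big)$; the anchor terms $a(s(b_j))$, the mixed brackets, and the $s([b_i,b_j])$ halves of type (c) assemble into $d_{\alpha_s}\big(s(\xi)^{m,k-m}\big)$ (recall that $d_{\alpha_s}$ raises the $\cB$-degree by one), not $d_{\alpha_s}\big(s(\xi)^{m+1,k-m-1}\big)$; and the $\rho_s(b_i,b_j)$ insertions involve the component $s(\xi)^{m-1,k-m+1}$, not $s(\xi)^{m+2,k-m-2}$ (cupping with $\rho_s$ raises the $\cB$-degree by two and lowers the $\calL$-degree by one). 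The superscripts in the printed statement are degree-consistent only if read with the $\calL$-degree first (as the right-hand side of \eqref{eq_spl2} literally suggests), whereas Lemma \ref{lemma_conti5}, which for $a=0$ and no \v{C}ech indices is the same assertion, follows the $\cB$-degree-first convention and reads $s(d_\cA\xi)^{m,k+1-m}=d_\calL\big(s(\xi)^{m,k-m}\big)+(-1)^m d_{\alpha_s}\big(s(\xi)^{m-1,k+1-m}\big)-\rho_s\smile s(\xi)^{m-2,k+2-m}$. So your setup and your claimed matches cannot both be right: you must fix one indexing convention and relabel before the ``careful accounting'' you defer, otherwise the matching cannot close. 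Two smaller points: the $s([b_i,b_j])$ pieces complete only $d_{\alpha_s}$ (the $d_\calL$ part is already complete from type (a) alone, contrary to your phrasing); and the sign in front of $d_{\alpha_s}$ is the Koszul sign attached to the chosen ordering of the $\cB$- and $\calL$-slots, to be fixed jointly with \eqref{eq_spl4} and with the shuffle signs of the cup product, not merely by moving one new entry to the front slot.
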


We check now how this decomposition depends on the splitting. Let $s'$ be another splitting defined over an open set $U'\subseteq X$, so that $s'-s = \phi : \cB_{U\cap U'} \to \calL_{U\cap U'}$.

\begin{lemma} \label{lemma_conti2}
Given $\xi \in \Omega_\cA^k$, and    two local splittings $s,s'$ as above,   we have
\[
s'(\xi)^{m,k-m} = s(\xi)^{m,m-k} +  \sum_a \wedge^a \phi \smile  s(\xi)^{m+a , k-m-a}  , 
\]
where $\phi=s'-s$, and $\wedge^a \phi \in \Omega_\cB^a \otimes \bigwedge^a \calL$ is defined by $(\wedge^a \phi) (b_1,\ldots b_a) = \phi(b_1) \wedge \ldots \wedge \phi(b_a)$.
\end{lemma}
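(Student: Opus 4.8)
The plan is a direct, purely local computation: evaluate both sides of the claimed identity on local sections of $\cB$ and of $\calL$ over $U\cap U'$ and check equality slot by slot. Starting from the description \eqref{split_forms}--\eqref{eq_spl2} of the bidegree components, $s'(\xi)^{m,k-m}$ is obtained by plugging lifted sections $s'(b)$ into the $\cB$-slots of $\xi$ and sections of $\calL$ into the remaining slots. The key step is to write $s'(b)=s(b)+\phi(b)$, with $\phi(b)$ a section of $\calL$, and to expand by $\cO_X$-multilinearity of $\xi$: this produces a sum indexed by the subsets $S$ of the set of $\cB$-slots, where in the $S$-term one inserts $\phi(b)$ into the slots belonging to $S$ and $s(b)$ into the others. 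I would then reorganize this sum according to the cardinality $a=|S|$, so that the $a$-th piece is a sum over the subsets $S$ of size $a$.

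For a fixed $S=\{i_1<\dots<i_a\}$, each argument $\phi(b_{i_t})$ lies in $\calL$, so the alternating property of $\xi$ allows me to transport these $a$ arguments to the right past the remaining lifted sections $s(b_j)$, $j\notin S$, until they sit next to the genuine $\calL$-arguments; the sign produced is precisely the shuffle sign $(-1)^\sigma$, for the shuffle determined by $S$, that appears in the explicit cup-product formula recalled above. Once the arguments are in this order, \eqref{eq_spl2} recognizes the $S$-term as the component $s(\xi)^{m+a,k-m-a}$ of $s(\xi)$, evaluated on the $b_j$ with $j\notin S$, on $\phi(b_{i_1})\wedge\dots\wedge\phi(b_{i_a})=(\wedge^a\phi)(b_{i_1},\dots,b_{i_a})\in\bigwedge^a\calL$ inserted in the $\calL$-slots, and on the remaining $\calL$-arguments; here one uses that evaluating an exterior form on $\calL$ against a decomposable element of $\bigwedge^a\calL$ amounts, by definition, to feeding in its factors one at a time.

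Summing over all $S$ of cardinality $a$ and comparing with the explicit formula for $\Xi\smile\eta$ applied to $\Xi=\wedge^a\phi\in\Omega^a_\cB\otimes\bigwedge^a\calL$ and $\eta=s(\xi)^{m+a,k-m-a}$ then identifies the $a$-th piece of the expansion with $(\wedge^a\phi\smile s(\xi)^{m+a,k-m-a})$ evaluated on the chosen arguments; the $a=0$ piece is $s(\xi)^{m,k-m}$ itself, since $\wedge^0\phi$ is the identity. Adding over $a$ gives the lemma. Equivalently, one can view the whole statement structurally: changing $s$ to $s'=s+\phi$ replaces the identification $\cA_{|U\cap U'}\iso\cB\oplus\calL$ by its composition with the automorphism $(b,l)\mapsto(b,l+\phi(b))$, and the formula of the lemma is exactly the effect of the induced automorphism of $\Lambda^\bullet(\cB\oplus\calL)^\ast$ on a homogeneous component — which unwinds into the same combinatorics. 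In either presentation, the only genuinely delicate point is the sign bookkeeping: one must check that the sign arising from reordering the arguments of the alternating form $\xi$ in each term of the multilinear expansion matches the shuffle sign $(-1)^\sigma$ of the cup-product formula. The rest is formal; in particular, since everything takes place over the fixed open set $U\cap U'$ and uses only $\cO_X$-multilinear operations, the fact that $s$ and $s'$ are only locally defined plays no role.
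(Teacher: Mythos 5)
Your proof is correct, and it is exactly the routine verification that the paper leaves implicit (Lemma \ref{lemma_conti2} is stated there without proof): expand $s'(b_i)=s(b_i)+\phi(b_i)$ by multilinearity, group the terms by the number $a$ of $\cB$-slots receiving $\phi$, move the $\calL$-valued arguments past the $s(b_j)$'s, and recognize the resulting shuffle-signed sum as the cup product of the Remark preceding the lemma; your reduction of the sign bookkeeping to the bijection between subsets $S$ of cardinality $a$ and $(m-a,a)$-shuffles, or equivalently the reformulation via the automorphism $(b,l)\mapsto(b,l+\phi(b))$ of $\cB\oplus\calL$, is the right way to organize it. One index should be corrected, in your write-up as in the printed statement: the $S$-term you describe is evaluated on the $m-a$ sections $b_j$ with $j\notin S$ and on $k-m+a$ sections of $\calL$, so by \eqref{eq_spl2} it is the component $s(\xi)^{m-a,\,k-m+a}$, and the $a$-th summand is $\wedge^a\phi\smile s(\xi)^{m-a,\,k-m+a}$ rather than $\wedge^a\phi\smile s(\xi)^{m+a,\,k-m-a}$ --- only with this index do the bidegrees match, since $\wedge^a\phi\smile\bullet$ raises the $\cB$-degree by $a$ and lowers the $\calL$-degree by $a$, consistently with the use of the operators $\wedge^t\phi_{\mathfrak s}\smile\bullet\colon K^{p,q}_a\to K^{p+t,q-t}_{a+1}$ in Lemma \ref{lemma_conti5} (likewise $s(\xi)^{m,m-k}$ in the display should read $s(\xi)^{m,k-m}$). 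With these indices your argument is complete.
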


Let now $\mathfrak U = \{U_i\}_{i\in I}$ be an open cover of $X$, and let $\mathfrak s= \{s_i\}$ be a collection of splittings of \eqref{ext}, with each $s_i$ defined over $U_i$. We can construct a lifting triple $(\{\alpha_{s_i}\},\{\rho_{s_i}\},\{\phi_{s_{ij}}\})$ as described in Subsection \ref{ss:holoLie}.
The  groups of   \v{C}ech cochains $K_q^p = \check{C}^q (\mathfrak U , \Omega^p_\cA)$ form a double complex with differentials $\delta_1=d_\cA$ and $\delta_2 = \delta$ the \v{C}ech differential. If  $\mathfrak U$ is good in the sense previously introduced, the cohomology of the total complex $T^k= \bigoplus_{p+q=k} K_q^p$ is isomorphic to the holomorphic Lie algebroid cohomology of $\cA$.

Now, the choice of the local splittings $\mathfrak s$ induces an isomorphism
\begin{equation} \label{eq_spl1}
\check{C}^q (\mathfrak U , \Omega_\cA^p) \stackrel{\mathfrak s}{\to} \bigoplus_{p'+r = p} \check{C}^q\big( \mathfrak U , \Omega_\cB^{p'} \otimes \Omega_\calL^r \big) .
\end{equation}
This isomorphism is explicitly given by  $\mathfrak s \{c_{i_0\cdots i_{a}}\} = \{s_{i_0}(c_{i_0\cdots i_a})\}$. We use the notation $K_q^{p,r} := \check C^q( \mathfrak U, \Omega^p_\cB \otimes \Omega_\calL^r ) $.
Under the isomorphism \eqref{eq_spl1}, the filtration takes the form 
\begin{equation} \label{eq_spl3}
F^p T^{p+q} = \bigoplus_{a=0}^{k} \bigoplus_{m= 0}^{q-a} K^{p+m,q-a-m}_{a}
\end{equation}

Now, for any element $h\in T^k$, write $h=(h_0,\ldots,h_k)$ with
$
h_a = \{(h_a)_{i_0\cdots i_s}\} \in \check{C}^s(\mathfrak U, \Omega_\cA^{k-a}).
$
Define $(\mathfrak s h)_a$ to be the image of $h_a$ under the isomorphism \eqref{eq_spl1}, namely, $\big((\mathfrak s h)_a)\big)_{i_0\cdots i_a} = s_{i_0}((h_a)_{i_0\cdots i_a})$. Moreover, we shall write $(\mathfrak s h)_a= \big( (\mathfrak s h)_a^{k-a,0},\ldots , (\mathfrak s h)_a^{0,k-a} \big)$, with the \v{C}ech cochains $(\mathfrak s h)_a^{m,k-a-m} \in K_a^{m,k-a-m} $ defined by
\begin{equation} \label{eqn_spl5}
\big((\mathfrak s h)_a^{m,k-m-a} \big)_{i_0\cdots i_a} = s_{i_0} \big( (h_a)_{i_0\cdots i_a} \big)^{m,k-m-a},
\end{equation}
where the r.h.s. of the equation is defined in equation \eqref{eq_spl2}.

We want to understand how the differential $d_T$ of the total complex $T$ behaves under the isomorphisms \eqref{eq_spl1}. To this end, we introduce the operators which will provide a decomposition of this differential.
Now, $d_T = d_\cA + \delta$, and by Lemma \ref{lemma_conti1} we see that $d_{\cA} = d_\calL + (-1)^p d_{\alpha_{\mathfrak s}} -  \rho_{\mathfrak s} \smile \bullet$, where:
\[
d_{\calL} : K^{p,q}_a \to K^{p,q+1}_a, \quad d_{\alpha_{\mathfrak s}} : K^{p,q}_a \to K^{p+1,q}_a, \quad \rho_{\mathfrak s}\smile \bullet : K^{p,q}_a \to K^{p+2,q-1}_a
\]
for $\{\eta_{i_0\cdots i_a}\} \in K^{p,q}_a$ are defined by 
\begin{eqnarray*}
d_{\calL} \{\eta_{i_0\cdots i_a}\} = \{d_\calL \eta_{i_0\cdots i_a}\}; \\
d_{\alpha_{\mathfrak s}} \{ \eta_{i_0\cdots i_a} \} = \{d_{\alpha_{s_{i_0}}} \eta_{i_0\cdots i_a}\} ;\\
\rho_{\mathfrak s} \smile \{\eta_{i_0\cdots i_a} \}= \{\rho_{s_{i_0}} \smile \eta_{i_0\cdots i_a}\} .
\end{eqnarray*}

Moreover, for every positive integer $t$, using the cochain $\{\phi_{s_{ij}}\}$ of the lifting triple associated to $\mathfrak s$, we define the operators
\[
\wedge^t \phi_{\mathfrak s} \smile \bullet : K^{p,q}_a \to K^{p+t,q-t}_{a+1}
\]
via the formula
\[
\big( \wedge^t \phi_{\mathfrak s} \smile \{ \eta_{i_0\cdots i_a} \} \big)_{j_0\cdots j_{a+1}} = \wedge^t \phi_{j_0j_1} \smile \eta_{j_1,\cdots j_{a+1}} 
\]
for $\eta \in K^{p,q}_a$, see the Lemma \ref{lemma_conti2} for the r.h.s. of the equation.

Finally, we have the \v{C}ech coboundary operator $\delta : K^{p,q}_a \to K^{p,q}_{a+1}$.

Now we can state the following:

\begin{lemma} \label{lemma_conti5}
Let $h \in T^k$. According to the decomposition \eqref{eqn_spl5}, one has 

\begin{eqnarray*}
\big( \mathfrak s (d_Th) \big)_a ^{m, k+1-a-m} &=& d_\calL \big( (\mathfrak  s h)_a^{m,k-a-m} \big) + (-1)^m  d_{\alpha_\mathfrak s} \big( (\mathfrak s h)_a^{m-1, k+1-a-m} \big) +  \\
 & - & \rho_{\mathfrak s} \smile \big( (\mathfrak s h)_a^{m-2 , k+2-a-m} \big) + (-1)^{k+a} \delta \big( (\mathfrak s h)_{a-1}^{m,k+1-a-m} \big) + \\
 & + & \sum_{t=1}^{k-s-m} (-1)^{k+a+t+1}  \wedge^t \phi_{\mathfrak s} \smile \big( (\mathfrak s h)_{a-1}^{m-t, k+1-a-m+t} \big) .
\end{eqnarray*}

\end{lemma}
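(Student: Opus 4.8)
The statement is a direct, if bookkeeping-heavy, computation; the plan is to track separately how the two pieces of the total differential $d_T = d_\cA + \delta$ transform under the splitting isomorphism $\mathfrak s$ of \eqref{eq_spl1}, invoking Lemma \ref{lemma_conti1} for the $d_\cA$-part and Lemma \ref{lemma_conti2} for the $\delta$-part, and then re-expressing everything through the operators $d_{\calL}$, $d_{\alpha_{\mathfrak s}}$, $\rho_{\mathfrak s}\smile\bullet$, $\wedge^t\phi_{\mathfrak s}\smile\bullet$ and $\delta$ of the triple complex $K^{p,q}_a$.

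First I would handle $\mathfrak s(d_\cA h)$. Both $d_\cA$ and $\mathfrak s$ act fibrewise on \v{C}ech cochains: on the intersection $U_{i_0\cdots i_a}$ one has $(d_\cA h_a)_{i_0\cdots i_a} = d_\cA((h_a)_{i_0\cdots i_a})$, while $\mathfrak s$ applies the single splitting $s_{i_0}$. Hence Lemma \ref{lemma_conti1}, applied with $s = s_{i_0}$ to $\xi = (h_a)_{i_0\cdots i_a} \in \Omega^{k-a}_\cA(U_{i_0\cdots i_a})$, computes each bidegree component $s_{i_0}((d_\cA h_a)_{i_0\cdots i_a})^{m,\,k+1-a-m}$ in terms of $d_{\calL}$, $d_{\alpha_{s_{i_0}}}$ and $\rho_{s_{i_0}}\smile\bullet$ applied to components of $s_{i_0}((h_a)_{i_0\cdots i_a})$. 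Reassembling over all $(a+1)$-tuples, and recalling that $d_{\calL}$, $d_{\alpha_{\mathfrak s}}$ and $\rho_{\mathfrak s}\smile\bullet$ are by definition the fibrewise operators $\{d_{\alpha_{s_{i_0}}}(\cdot)\}$, etc., on $K^{p,q}_a$, this produces the first three summands on the right-hand side of the claimed identity.

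Next I would treat $\mathfrak s(\delta h)$, where the genuine content sits. The \v{C}ech differential sends $h_{a-1} \in \check C^{a-1}$ to the cochain whose component on $U_{i_0\cdots i_a}$ is $\sum_{j=0}^{a}(-1)^{j}(h_{a-1})_{i_0\cdots\widehat{i_j}\cdots i_a}$. For $j \ge 1$ the face $i_0\cdots\widehat{i_j}\cdots i_a$ still has $i_0$ as its leading index, so applying $s_{i_0}$ gives exactly the $j$-th summand of $\delta$ acting on the split cochain $(\mathfrak s h)_{a-1}^{m,\,k+1-a-m}$ (cf.\ \eqref{eqn_spl5}). For $j=0$ the face is $i_1\cdots i_a$, whose leading splitting inside $(\mathfrak s h)_{a-1}$ is $s_{i_1}$ rather than $s_{i_0}$, so one must compare the two: by Lemma \ref{lemma_conti2}, taken with $s'=s_{i_0}$, $s=s_{i_1}$ and hence $s'-s = -\phi_{s_{i_0i_1}}$, the difference $s_{i_0}((h_{a-1})_{i_1\cdots i_a}) - s_{i_1}((h_{a-1})_{i_1\cdots i_a})$ expands into a sum over $t\ge 1$ of terms $\wedge^t\phi_{s_{i_0i_1}}\smile s_{i_1}((h_{a-1})_{i_1\cdots i_a})^{m-t,\,\cdots}$, with the sign $(-1)^t$ coming from $\wedge^t(-\phi_{s_{i_0i_1}}) = (-1)^t\wedge^t\phi_{s_{i_0i_1}}$. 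The $t=0$ term of this expansion merges with the $j\ge 1$ terms to complete the full \v{C}ech differential $\delta$ on $(\mathfrak s h)_{a-1}$ (the fourth summand), while the $t\ge 1$ terms, reorganized through the operators $\wedge^t\phi_{\mathfrak s}\smile\bullet : K^{p,q}_a \to K^{p+t,q-t}_{a+1}$, give the last summand; combined with the first step, this proves the formula.

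The one real difficulty is the sign accounting: one must fix once and for all the Koszul sign conventions for the five operators $\delta$, $d_{\calL}$, $d_{\alpha_{\mathfrak s}}$, $\rho_{\mathfrak s}\smile\bullet$ and $\wedge^t\phi_{\mathfrak s}\smile\bullet$ on the triple complex $K^{p,q}_a$, and then keep careful track, face by face, of which local splitting $s_{i_0}$ governs each summand. The exponents $k+a$ and $k+a+t+1$ in the statement are precisely the combined effect of the \v{C}ech sign $(-1)^j$, the position of $\delta$ within $d_T$, and the sign carried by Lemma \ref{lemma_conti2}; no step requires anything beyond the two preceding lemmas and the definitions of the operators involved.
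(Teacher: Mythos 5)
Your proposal is correct and follows essentially the same route as the paper: the $d_\cA$-part is reduced to Lemma \ref{lemma_conti1} applied with the leading splitting $s_{i_0}$ on each \v{C}ech index, and the \v{C}ech part is handled by comparing $\mathfrak s(\delta h)$ with $\delta(\mathfrak s h)$, observing that they differ only on the face omitting $i_0$, where Lemma \ref{lemma_conti2} (with $s'-s=-\phi_{s_{i_0i_1}}$) produces exactly the $\wedge^t\phi_{\mathfrak s}\smile\bullet$ correction terms. Your sign bookkeeping is stated at the same level of detail as the paper's own proof, so nothing essential is missing.
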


\begin{proof}
After Lemma \ref{lemma_conti1}, it suffices to understand how the \v{C}ech differential $\delta$ behaves under the decomposition \eqref{eqn_spl5}. We have:
\begin{eqnarray*}
\big( (\mathfrak s \delta h)_a^{m,n} \big)_{i_0\cdots i_a} & = & s_{i_0} \big( (\delta h_{a-1})_{i_0\cdots i_a} \big)^{m,n}  \\
 & = & - s_{i_0} \big( (h_{a-1})_{i_1,\cdots i_a} \big) + \sum_{\nu=1}^a (-1)^\nu s_{i_0} \big( (h_{a-1})_{i_0\cdots \hat{i}_\nu \cdots i_a}\big)^{m,n} \ . 
\end{eqnarray*}
On the other hand we have:
\begin{eqnarray*}
\delta \big( (\mathfrak s h)_{a-1}^{m,n} \big)_{i_0\cdots i_a} & = & - s_{i_1} \big( (h_{a-1})_{i_1,\cdots i_a} \big) + \sum_{\nu=1}^a (-1)^\nu s_{i_0} \big( (h_{a-1})_{i_0\cdots \hat{i}_\nu \cdots i_a}\big)^{m,n}\ .
\end{eqnarray*}
Using Lemma \ref{lemma_conti2} we conclude.
\end{proof}

Let us rewrite this as follows. Denote by $\Delta^m_{\mathfrak s}$ the composition of the isomorphism $T^k \stackrel{\mathfrak s}{\iso} \bigoplus_{a,b}K_a^{b,k-a-b}$ with the projection onto the subspace $\bigoplus_a K_a^{m,k-a-m} = \gr^m T^{k}$. Then the equation of Lemma \ref{lemma_conti5} becomes
\begin{align} 
\Delta_{\mathfrak s}^m (d_T h)\  =\  &  d_{\calL} (\Delta^m_{\mathfrak s} (h)) + (-1)^m d_{\alpha_{\mathfrak s}}(\Delta_{\mathfrak s}^{m-1}(h)) - \rho_{\mathfrak s} \smile \Delta^{m-2}_{\mathfrak s}(h) \nonumber \\
& + (-1)^k \delta(\Delta^m_{\mathfrak s}(h)) + \sum_t (-1)^{k+t+1} \wedge^t \phi_{\mathfrak s} \smile \Delta^{m-t}_{\mathfrak s}(h)\ . \label{eqn_spl100}
\end{align}

Moreover, for each  positive integer $a$, we can introduce the operators $D_{\mathfrak s}^a : \gr^m T^k \to \gr^{m+a} T^{k+1}$ such that for $h=\Delta_{\mathfrak s}^p h$ one has $\Delta_{\mathfrak s}^{p+a}\big( d_T(\Delta^p_{\mathfrak s}(h)) \big) = D^a_{\mathfrak s}(\Delta^p_{\mathfrak s} h)$. Explicitly, we have
\[
D^0_{\mathfrak s} = d_\calL + (-1)^k \delta\ , \qquad D^1_{\mathfrak s} = (-1)^m d_{\alpha_{\mathfrak s}} + (-1)^{k} \phi_{\mathfrak s} \smile \bullet\ ,  
\] 
\[
D^2_{\mathfrak s} = - \rho_{\mathfrak s} \smile \bullet + (-1)^{k+1} \wedge^2 \phi_{\mathfrak s} \smile \bullet\ , \qquad D^a_{\mathfrak s} = (-1)^{k+a} \wedge^a \phi_{\mathfrak s} \smile \bullet\ .
\]
Using these operators, we can write the total differential in the  compact form
\begin{equation} \label{eqn_spl111}
\Delta_{\mathfrak s}^m(d_T h) = \sum_a D^a_{\mathfrak s}(\Delta_{\mathfrak s}^{m-a}(h) ) 
\end{equation}

Note that, according to \eqref{eq_spl3}, $h\in F^pT^{p+q}$ if and only if
\begin{equation*} 
(\mathfrak s h)_a^{m , p+q-a-m} = 0 \qquad \text{for }\ \  m < p.
\end{equation*}
Then using the $\Delta^m_{\mathfrak s}$ operators, this becomes:
\begin{equation} \label{eqn_spl6}
h \in F^pT^{p+q} \ \ \text{if and only if} \ \ \Delta_{\mathfrak s}^m(h) = 0 \ \ \text{for }\ m<p .
\end{equation}

Now, recall that the terms of the spectral sequence $E_r^{p,q}$ may be defined (cf. \cite{Voisin}) as the quotient $E_r^{p,q} = Z_r^{p,q} / B_r^{p,q}$, where
\[
Z_r^{p,q} = \{\  x \in F^p T^{p+q} \ \ |\ \  d_T x \in F^{p+r}T^{p+q+1}\  \}
\]
and 
\[
B_r^{p,q} \ =\  Z_{r-1}^{p+1,q-1}\  +\  d_T Z_{r-1}^{p-r+1 , q+r-2} \ \subseteq Z_r^{p,q},
\]
while the differentials $d_r:E_r^{p,q} \to E_r^{p+r,q+1-r}$ are induced by $d_T$. 
By this definition and  \eqref{eqn_spl6}, we see that $h \in Z_r^{p,q}$ is a representative of a class $[h] \in E_r^{p,q}$ if and only if $\Delta^m_{\mathfrak s}(h) = 0$ for $m<p$ and $ \Delta_{\mathfrak s}^m (d_Th) = 0$ for $m<p+r$.

For $r=0$  we obtain
\[
Z_0^{p,q} = F^p T^{p+q}, \quad B_0^{p,q} = F^{p+1}T^{p+q},
\]
so that $E_0^{p,q} = \gr^p T^{p+q} \stackrel{\mathfrak s}{\iso} \bigoplus K_a^{p,q-a}$. To compute the differential $d_0$, let $h\in F^pT^{p+q}$; then equation \eqref{eqn_spl6} is satisfied, and from Lemma \ref{lemma_conti5} we obtain that $d_0[h]$ is the class in $E_0^{p,q}$ of
\begin{equation} \label{eqn_spl11}
\Delta^p_{\mathfrak s}(d_Th) = D^0_{\mathfrak s}(\Delta^p_{\mathfrak s}(h)).
\end{equation}
Now note that $D^0_{\mathfrak s}= d_\calL + (-1)^{p+q} \delta$ does not depend on the choice of $\mathfrak s$ and coincides with the total complex associated to the double complex $K_\bullet^{p,\bullet} = \check{C}^\bullet(\mathfrak U, \Omega^p_\cB \otimes \Omega_\calL^\bullet)$. So the term $E_1^{p,q}$ of the spectral sequence is isomorphic to $\mathbb H^p(X; \Omega^p_\cB \otimes \Omega^\bullet_\calL)$; this yields a hands-on proof of Theorem \ref{mainDR}. 

We can now compute the differential $d_1:E_1^{p,q} \to E_1^{p+1,q}$. Let $h \in Z_1^{p,q}$. Since $B_1^{p+1,q} = Z_0^{p+2,q-1} + d_T Z_0^{p+1,q-1} = F^{p+2}T^{p+q+1} + d_T F^{p+1}T^{p+q}$, we have
\[
d_Th = \Delta_{\mathfrak s}^{p+1} (d_T h) \quad \text{mod } \ B_1^{p+1,q}\ .
\]
So from equation \eqref{eqn_spl111} we obtain:
\[
d_1[h] = [D^1_{\mathfrak s}(\Delta_{\mathfrak s}^p(h)) + D^0(\Delta_{\mathfrak s}^{p+1}(h)]
\]
and, since $D^0(\Delta_{\mathfrak s}^{p+1}(h))$ is a coboundary for the $d_0$ differential, we have
\[
d_1[h] = [D^1_{\mathfrak s}(\Delta_{\mathfrak s}^p(h))].
\]

So, we can identify the $E_2$ term of the spectral sequence with the cohomology of the complex $\mathbb{H}^{q}(X; \Omega^p_\cB \otimes \Omega^\bullet_\calL) \stackrel{D^1_{\mathfrak s}}{\to} \mathbb{H}^{q}(X; \Omega^{p+1}_\cB \otimes \Omega^\bullet_\calL)$. One should check that the differential $D^1_{\mathfrak s}$ is well defined, i.e., it does not depend on the choice of the collection of splittings $\mathfrak s$. In fact, we have:

\begin{lemma}
Let $\mathfrak s$ and $\mathfrak s'$ be two collections of splittings over the same open cover $\mathfrak U$. Let $\psi_i:\cB_{|U_i} \to \calL_{|U_i}$ be the differences $\psi_i = s'_i - s_i$.
Then, for any $\xi \in \gr^p T^{p+q}$ with $D^0(\xi) = 0$, we have
\[
D^1_{\mathfrak s'} \xi - D^1_{\mathfrak s} \xi = D^0(\eta),
\]
for $\eta \in \gr^{p+1} T^{p+q}$ defined as $\eta = \psi \smile \xi$.  
\end{lemma}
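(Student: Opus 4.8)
The strategy is to use that both the total differential $d_T$ and the operator $D^0 = d_\calL + (-1)^k\delta$ are \emph{intrinsic}: they do not involve any choice of splittings, only the isomorphisms $T^k \stackrel{\mathfrak s}{\iso} \bigoplus_{a,b} K_a^{b,k-a-b}$ do. Hence the two expressions $\sum_a D^a_{\mathfrak s}$ and $\sum_a D^a_{\mathfrak s'}$ for $d_T$ provided by \eqref{eqn_spl111} are conjugate under the transition operator $\Theta := \mathfrak s'\circ\mathfrak s^{-1}$ of $\bigoplus_{a,b} K_a^{b,k-a-b}$, and the lemma follows by comparing the parts of these two expressions that shift the $\cB$-grading by exactly one unit.

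First I would describe $\Theta$. Since $\psi_i = s'_i - s_i$ is defined over all of $U_i$, the cochain $(\mathfrak s' h)_a = \{s'_{i_0}((h_a)_{i_0\cdots i_a})\}$ is obtained from $(\mathfrak s h)_a = \{s_{i_0}((h_a)_{i_0\cdots i_a})\}$ by applying, componentwise and with no change of \v{C}ech degree, the change-of-splitting formula of Lemma \ref{lemma_conti2} with $\phi = \psi_{i_0}$. Thus $\Theta$ preserves the \v{C}ech degree, is unipotent and triangular for the grading by $\cB$-degree, and its shift-by-one block $\Theta_{(1)}$ equals, up to sign, the operator $\psi\smile\bullet$ that contracts the $\calL$-value of $\psi_{i_0}$ against one $\calL$-slot --- the very operator used to form $\eta = \psi\smile\xi$.

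Next I would expand $\sum_a D^a_{\mathfrak s'} = \Theta\bigl(\sum_a D^a_{\mathfrak s}\bigr)\Theta^{-1}$ and isolate the part that shifts the $\cB$-grading by one. Since $\Theta$, $\Theta^{-1}$ and every $D^a_{\mathfrak s}$ shift that grading by a nonnegative amount, the only surviving terms are $D^1_{\mathfrak s}$, $\Theta_{(1)}\circ D^0$, and $D^0\circ(\Theta^{-1})_{(1)} = -D^0\circ\Theta_{(1)}$; hence
\[
D^1_{\mathfrak s'} - D^1_{\mathfrak s} = \Theta_{(1)}\circ D^0 - D^0\circ\Theta_{(1)} .
\]
Evaluating at $\xi$ with $D^0(\xi) = 0$ kills the first term and leaves $D^1_{\mathfrak s'}\xi - D^1_{\mathfrak s}\xi = -D^0(\Theta_{(1)}\xi) = D^0(\eta)$ with $\eta = \psi\smile\xi$, once the sign coming from Lemma \ref{lemma_conti2} (absorbed into $\Theta_{(1)} = \mp\,\psi\smile\bullet$) is tracked. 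This is exactly the role of the hypothesis $D^0(\xi) = 0$: without it an extra term $\psi\smile D^0(\xi)$ would remain.

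I expect the real work to be bookkeeping rather than ideas: one must check that only the single-$\psi$ term of Lemma \ref{lemma_conti2} contributes to $\Theta_{(1)}$ (the terms $\wedge^t\psi$ with $t\ge 2$ feed $D^2_{\mathfrak s'},D^3_{\mathfrak s'},\dots$ instead), that the transition introduces no \v{C}ech shift --- which is precisely why each $\psi_i$ is taken over $U_i$, in contrast with the operators $\wedge^t\phi_{\mathfrak s}\smile\bullet$ appearing in the $D^a_{\mathfrak s}$, which stem from $\delta$ meeting the failure of the $s_i$ to glue --- and that all signs assemble. An alternative that uses only results already at hand is to compute directly from $D^1_{\mathfrak s} = (-1)^m d_{\alpha_{\mathfrak s}} + (-1)^k\phi_{\mathfrak s}\smile\bullet$: here $\alpha_{s'_i} - \alpha_{s_i} = \ad_{\psi_i}$, so Lemma \ref{lem_change_d} (equation \eqref{change_d}) governs $d_{\alpha_{\mathfrak s'}} - d_{\alpha_{\mathfrak s}}$, while $\phi_{s'_{ij}} - \phi_{s_{ij}} = (\delta\psi)_{ij}$; recombining these via the Leibniz rules for $\delta$ and $d_\calL$ with respect to $\smile$ yields $D^1_{\mathfrak s'}\xi - D^1_{\mathfrak s}\xi = D^0(\psi\smile\xi)$.
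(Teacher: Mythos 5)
The paper states this lemma without proof, so there is no in-text argument to compare yours against; judged on its own, your proposal is essentially a correct proof, and either of the two routes you outline would do. The conjugation argument is sound: by \eqref{eqn_spl111} the operator $\sum_a D^a_{\mathfrak s}$ is $\mathfrak s\circ d_T\circ \mathfrak s^{-1}$ acting on $\bigoplus_m \gr^m T^\bullet$; the transition $\Theta=\mathfrak s'\circ\mathfrak s^{-1}$ is computed componentwise from Lemma \ref{lemma_conti2} with $\phi=\psi_{i_0}$ (no \v{C}ech shift, as you correctly stress, in contrast with the $\wedge^t\phi_{\mathfrak s}\smile\bullet$ operators); it is unipotent with degree-one block $\psi\smile\bullet$, and comparing the blocks of $\Theta\bigl(\sum_a D^a_{\mathfrak s}\bigr)\Theta^{-1}=\sum_a D^a_{\mathfrak s'}$ that raise the $\cB$-degree by exactly one gives $D^1_{\mathfrak s'}-D^1_{\mathfrak s}=\Theta_{(1)}D^0-D^0\Theta_{(1)}$, whence the claim on $D^0$-closed $\xi$; the $\wedge^t\psi$ terms with $t\geq 2$ indeed only feed the higher blocks. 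The alternative direct computation you sketch (Lemma \ref{lem_change_d} for $d_{\alpha_{\mathfrak s'}}-d_{\alpha_{\mathfrak s}}$ together with $\phi_{s'_{ij}}-\phi_{s_{ij}}=(\delta\psi)_{ij}$ and the Leibniz rules for $\smile$) is the same argument unpackaged. The one loose end is the overall sign, which you flag but do not settle: with the plus sign of Lemma \ref{lemma_conti2} one gets $\Theta_{(1)}=+\,\psi\smile\bullet$, and the conjugation then yields $D^1_{\mathfrak s'}\xi-D^1_{\mathfrak s}\xi=-D^0(\psi\smile\xi)$, i.e.\ $\eta=-\psi\smile\xi$ rather than $+\psi\smile\xi$. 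Since the paper's sign conventions are not internally consistent between Lemma \ref{lemma_conti5} and \eqref{eqn_spl100}, and since only the fact that the difference is a $D^0$-coboundary is needed to make $d_1$ well defined on $E_1$, this is a bookkeeping issue rather than a gap --- but you should track it explicitly rather than assert that the signs assemble.
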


The further terms of the spectral sequence become quite complicated, so that we just give a hint of how one can describe the $d_2$ differential. Let $h \in F^pT^{p+q}$ be a cochain defining an element $[h] \in E_2^{p,q}$. This means that $h \in Z^{p,q}_2$, and by \eqref{eqn_spl6} we have 
\[
D^0_{\mathfrak s} \big( \Delta^p_{\mathfrak s} (h) \big) = 0, \qquad D^1_{\mathfrak s}(\Delta^p_{\mathfrak s}(h)) = (-1)^p D^0_{\mathfrak s} (\Delta^{p+1}_{\mathfrak s}(h)).
\]
Then 
\[
d_Th = D^2_{\mathfrak s}(\Delta^p_{\mathfrak s}(h)) + D^1_{\mathfrak s}(\Delta_{\mathfrak s}^{p+1}(h)) + D^3_{\mathfrak s}(\Delta_{\mathfrak s}^p(h)) + D^2_{\mathfrak s}(\Delta_{\mathfrak s}^{p+1}(h) \quad \text{mod}\ \ B_2^{p+2,q-1}
\]
and the terms on the r.h.s.\ of the equation yield a representative of $d_2[h]$.

\bigskip\section{Example: cohomology of the Atiyah algebroid of a line bundle}\label{linebundles}

We apply the results of the previous sections to study the cohomology of the Atiyah algebroid  $\calD_{\mathscr M}$ of a line bundle  $\mathscr M$.   In this case the sequence \eqref{atiyah} takes the form
\begin{equation} \label{extlb}
0\to \cO_X \to \mathcal{D}_{\mathscr M} \to \Theta_X \to 0  \ , 
\end{equation}
where $\cO_X$ is regarded as a bundle of abelian Lie algebras.
Since $\cO_X$ has rank one, the filtration has only one nontrivial term, i.e.:
\[
F^k \Omega^k_{\calD_{\mathscr M}} \simeq \Omega^k_{X},
\]
and the only nontrivial graded objects are
\begin{equation} \label{eq:atd}
\gr_{k-1} \Omega^k_{\calD_{\mathscr M}} \simeq \Omega^{k-1}_X, \qquad \gr_k \Omega_{\calD_{\mathscr M}}^k \simeq \Omega^k_X .
\end{equation}

One can compute the cohomology of $\calD_{\mathscr M}$ directly. To this end observe that, denoting by $T^k_{\calD_{\mathscr M}}$ the complex computing the hypercohomology $\mathbb H^q(X; \Omega_{\calD_{\mathscr M}}^\bullet)$, and by $T^\bullet_{\Theta_X}$ the complex computing the hypercohomology $\mathbb H^q(X; \Omega_X^\bullet) = H^q(X;\mathbb C)$, from \eqref{eq:atd} we can deduce that $F^kT^k_{\calD_{\mathscr M}} = T^k_{\Theta_X} $ and   $\gr_k T^k _{\calD_{\mathscr M}} = T^{k-1}_{\Theta_X}$. From this we obtain the  exact sequence of complexes
\begin{equation} \label{eqn:last}
0 \rightarrow  T_{\Theta_X}^\bullet \rightarrow T^\bullet_{\calD_{\mathscr M}} \rightarrow T_{\Theta_X}^{\bullet -1} \rightarrow 0.
\end{equation}

\begin{prop} \label{thm:coho_atiyah}
The short exact sequence \eqref{eqn:last} induces a long exact sequence in cohomology
\begin{equation}\label{long}
\cdots \rightarrow  H^{q}(X;\mathbb C) \rightarrow \mathbb H^q(X; \Omega^\bullet_{\calD_{\mathscr M}}) \rightarrow H^{q-1}(X;\mathbb C) \rightarrow  H^{q+1} (X; \mathbb C)\to\cdots
\end{equation}
whose connecting morphism is given by the cup product with the first Chern class of ${\mathscr M}$.
In particular, we obtain 
\[
\mathbb H^q(\calD_{\mathscr M};\C) \simeq \frac{H^q(X;\C)}{\Image \gamma_{\mathscr M,q-2}} \oplus 
\ker \gamma_{\mathscr M,q-1},
\]
where
\begin{eqnarray*}  \gamma_{\mathscr M,q} \colon H^q(X,\C) & \to & H^{q+2}(X,\C) \\
\gamma_{\mathscr M,q} (x) & = & c_1({\mathscr M}) \smile x.
\end {eqnarray*}  
\end{prop}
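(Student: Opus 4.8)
The plan is to read off all the conclusions from the short exact sequence of complexes \eqref{eqn:last} together with the identification of the connecting map. First I would apply the long exact sequence in cohomology associated to \eqref{eqn:last}. Since $T^\bullet_{\Theta_X}$ computes $\mathbb H^\bullet(X;\Omega_X^\bullet)$, which by the holomorphic Poincar\'e lemma (and the degeneration of the Hodge-to-de Rham spectral sequence, or simply the fact that $\Omega_X^\bullet$ is a resolution of $\mathbb C_X$) equals $H^\bullet(X;\mathbb C)$, the long exact sequence reads
\[
\cdots \to H^q(X;\mathbb C) \to \mathbb H^q(X;\Omega^\bullet_{\calD_{\mathscr M}}) \to H^{q-1}(X;\mathbb C) \xrightarrow{\ \partial\ } H^{q+1}(X;\mathbb C) \to \cdots,
\]
where the shift by one in the last term comes from the fact that $T^{\bullet-1}_{\Theta_X}$ is $T^\bullet_{\Theta_X}$ shifted down by one degree, so its $H^{q-1}$ is the $(q-1)$-st de Rham cohomology and the connecting map lands in degree $q+1$ (one extra degree from the shift, plus the usual $+1$). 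This gives \eqref{long}.

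The substantive point is to identify the connecting homomorphism $\partial\colon H^{q-1}(X;\mathbb C)\to H^{q+1}(X;\mathbb C)$ with cup product by $c_1(\mathscr M)$. Here I would argue on the level of \v Cech–de Rham (or \v Cech–holomorphic-form) double complexes, using the explicit description of $\calD_{\mathscr M}$ from Section \ref{extensions}: the extension \eqref{extlb} is classified by the lifting triple attached to a collection of local splittings $s_i\colon \Theta_X|_{U_i}\to \calD_{\mathscr M}|_{U_i}$, and the transition data $\phi_{s_{ij}}=s_j-s_i\in\Omega^1_{\Theta_X}(\cO_X)(U_{ij})$ together with the curvature-type terms $\rho_{s_i}$ encode exactly the Atiyah class of $\mathscr M$, whose trace (here $\mathscr M$ is a line bundle, so no trace is needed) is $c_1(\mathscr M)$, represented in $H^1(X;\Omega^1_X)\hookrightarrow H^2(X;\mathbb C)$. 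Concretely: lift a class $x\in H^{q-1}(X;\mathbb C)$, represented in the total complex of $\check C^\bullet(\mathfrak U,\Omega^\bullet_X)$, to a cochain in $T^\bullet_{\calD_{\mathscr M}}$ via the splittings $\mathfrak s$; apply the total differential $d_T$; using Lemma \ref{lemma_conti5} (equivalently \eqref{eqn_spl111}) the image has its top-filtration component governed by the operator $D^1_{\mathfrak s}=(-1)^m d_{\alpha_{\mathfrak s}}+(-1)^k\phi_{\mathfrak s}\smile\bullet$ and the next by $D^2_{\mathfrak s}=-\rho_{\mathfrak s}\smile\bullet+(-1)^{k+1}\wedge^2\phi_{\mathfrak s}\smile\bullet$; since on the $\cO_X$-valued piece $d_{\alpha_{\mathfrak s}}$ is ordinary $d$ and the $\wedge^2\phi$ terms vanish ($\cO_X$ has rank one), what survives is precisely cup product with the \v Cech–de Rham representative $\{\phi_{s_{ij}}\}\oplus\{d\rho_{s_i}\}$ of the Atiyah class, i.e.\ with $c_1(\mathscr M)$.

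Once $\partial=\gamma_{\mathscr M,q-1}\colon x\mapsto c_1(\mathscr M)\smile x$ is established, the final displayed isomorphism is pure homological algebra: the long exact sequence breaks into short exact sequences
\[
0\to \operatorname{coker}\gamma_{\mathscr M,q-2}\to \mathbb H^q(X;\Omega^\bullet_{\calD_{\mathscr M}})\to \ker\gamma_{\mathscr M,q-1}\to 0,
\]
and this sequence splits because $\ker\gamma_{\mathscr M,q-1}$ is a $\mathbb C$-vector space, hence free; writing $\operatorname{coker}\gamma_{\mathscr M,q-2}=H^q(X;\mathbb C)/\Image\gamma_{\mathscr M,q-2}$ gives exactly the stated formula. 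The main obstacle I anticipate is the second step: pinning down signs and checking that the various cup-product operators $D^1_{\mathfrak s}$, $D^2_{\mathfrak s}$ assemble into cup product by the single class $c_1(\mathscr M)$ rather than something that only agrees with it up to sign or a coboundary — this requires a careful bookkeeping of the \v Cech–de Rham bicomplex and the identification of the Atiyah class of a line bundle with its first Chern class in $H^2(X;\mathbb C)$, for which one can invoke the standard fact (e.g.\ via the exponential sequence) that under $H^1(X;\Omega^1_X)\to H^2(X;\mathbb C)$ the Atiyah class maps to $c_1$.
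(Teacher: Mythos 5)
Your proposal is correct and takes essentially the same route as the paper: apply the long exact sequence of \eqref{eqn:last} and compute the connecting morphism by lifting a closed cochain through the local splittings $\mathfrak s$, using the decomposition of $d_T$ into the operators $D^a_{\mathfrak s}$ from Section \ref{spectral}, so that the surviving term is cup product with the cocycle attached to the lifting triple, i.e.\ with $c_1(\mathscr M)$, after which the splitting of the resulting short exact sequences of $\C$-vector spaces gives the final isomorphism. The only differences are cosmetic: the paper simplifies the computation by choosing flat local connections, so $\rho_{s_i}=0$ and only $\phi_{\mathfrak s}\smile\bullet$ survives, with $\{\phi_{s_{ij}}\}$ a \v Cech cocycle representing $c_1(\mathscr M)$ in $H^1(X;\Omega^1_X)$, whereas if you keep general $\rho_{s_i}$ the correct \v Cech--de Rham representative is the pair $(\{\rho_{s_i}\},\{\phi_{s_{ij}}\})$ rather than $\{d\rho_{s_i}\}$ (which vanishes by the Bianchi identity).
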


\begin{proof}
Let  $(\{\alpha_{s_i}\},\{\rho_{s_i}\}, \{\phi_{s_{ij}}\})$ be a  lifting triple associated with a collection
$\mathfrak s = \{s_i\}$  of splittings of the exact sequence \eqref{extlb}. Note that giving a splitting of this sequence is equivalent to giving a connection on $\mathscr{M}$. Since we can always locally define flat connections on a line bundle, we can assume that $\rho_{s_i} = 0$. Moreover, the extension is abelian, and the action of $\Theta_X$ on $\cO_X$ coincides with the standard one, so that the $\alpha_{s_i}$ are the restriction to $U_i$ of $\alpha_{std}$, the standard action of $\Theta_X$ on $\cO_X$. Finally, the cocycle $\{\phi_{s_{ij}}\}$ is $\delta$-closed, and its cohomology class in $H^1(X; \Omega^1_X)$ is the first Chern class of ${\mathscr M}$. 

Let $c \in T^{q-1}_{\Theta_X}$ be a representative of a class in $H^{q-1}(X;\mathbb C)$, and denote by $\sigma$ the connecting morphism. Then $\sigma([c]) = [d_{\mathcal D_{\mathscr M}} x]$, for $x$ any lifting of $c$ to $T^q_{\mathcal D_{\mathscr M}}$.
In particular we can choose $x=(0,c)$, where  we are using $\mathfrak s$ to identify $T^q_{\mathcal D_{\mathscr M}}$ with $T^q_{\Theta_X} \oplus T^{q-1}_{\Theta_X}$.
So we have
\[
d_{\calD_{\mathscr M}} x =  \delta c + d_{\alpha_{std}} c + \phi_{\mathfrak s} \smile c\ .
\]
As $d_{\alpha_{std}}$ coincides with $d_X$, the de Rham differential of $X$, and $c$ is closed in $T^{q-1}_{\Theta_X}$, we have   $\delta c + d_{\alpha_{std}} c = 0$, and we obtain   $d_{\calD_{\mathscr M}} x = \phi_{\mathfrak s} \smile c$. Since $\phi_{\mathfrak s}$ is a representative of the Chern class of $\mathscr M$, the proposition is proved.
\end{proof}

As in this case $\Omega_\calL^\bullet$ is the two term complex $0\to \cO_X \stackrel{0}{\rightarrow} \cO_X\to 0 $, we have the isomorphisms
\[
E_1^{p,q} \simeq \mathbb H^q(X;\Omega_\calL^\bullet \otimes \Omega_\cB^p) \simeq H^q(X;\Omega^p_X) \oplus H^{q-1}(X;\Omega_X^{p}) .
\]

Using the computations made in the previous section, we obtain:
\begin{prop} 
(i) For $(\xi_q,\xi_{q-1}) \in H^q(X;\Omega^p_X) \oplus H^{q-1}(X;\Omega^p_X)\simeq E_1^{p,q}$, one has
\[
d_1(\xi_q,\xi_{q-1}) = (d_X \xi_q + c_1(\mathscr M) \smile \xi_{q-1} , d_X \xi_{q-1}) .
\]

(ii)  If $X$ is a compact K\"ahler manifold, the spectral sequence degenerates at the second step.

Moreover, the cohomology $\mathbb H^k (X; \Omega^\bullet_{\mathcal D_{\mathscr M}} )$ inherits a Hodge structure of weight $(k-1,k)$.
\end{prop}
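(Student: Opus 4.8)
The plan is to obtain (i) by specializing the general description of the $d_1$ differential established in Section~\ref{spectral}. For a class $[h]\in E_1^{p,q}$ one has $d_1[h]=[D^1_{\mathfrak s}(\Delta^p_{\mathfrak s}(h))]$ with $D^1_{\mathfrak s}=(-1)^m d_{\alpha_{\mathfrak s}}+(-1)^{k}\,\phi_{\mathfrak s}\smile\bullet$; I would plug in the lifting triple used in the proof of Proposition~\ref{thm:coho_atiyah}, namely $\rho_{s_i}=0$, $\alpha_{s_i}$ the restriction of the standard $\Theta_X$-action on $\cO_X$ (so that $d_{\alpha_{\mathfrak s}}$ acts as the de Rham differential $d_X$ on the $\Omega^\bullet_X$-coefficients), and $\{\phi_{s_{ij}}\}$ a $\delta$-closed \v{C}ech cocycle representing $c_1(\mathscr M)\in H^1(X,\Omega^1_X)$. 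Under the identification $E_1^{p,q}\simeq H^q(X,\Omega^p_X)\oplus H^{q-1}(X,\Omega^p_X)$ the first summand is represented in $K_q^{p,0}=\check C^q(\mathfrak U,\Omega^p_X)$ and the second in $K_{q-1}^{p,1}=\check C^{q-1}(\mathfrak U,\Omega^p_X)$. Now $d_{\alpha_{\mathfrak s}}\colon K^{p,r}_a\to K^{p+1,r}_a$ acts as $d_X$ on either summand, while $\phi_{\mathfrak s}\smile\bullet\colon K^{p,r}_a\to K^{p+1,r-1}_{a+1}$ sends the $\xi_q$-component into $K^{p+1,-1}_{q+1}=0$ (since $\Omega^{-1}_\calL=0$) and the $\xi_{q-1}$-component into $K^{p+1,0}_q$, where it becomes cup product with the class of $\phi_{\mathfrak s}$, that is $c_1(\mathscr M)\smile\xi_{q-1}$. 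Collecting the contributions and absorbing the overall sign factors into the chosen identification of $E_1^{p,q}$ yields the formula of (i); the only bookkeeping involved is keeping track of signs and of the bigraded piece $K^{p,r}_a$ in which each term lands.

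For (ii), I would first invoke Fr\"olicher degeneration on $X$: since $X$ is compact K\"ahler, the maps on $H^q(X,\Omega^p_X)$ induced by the exterior derivative vanish, so by (i) the differential on $E_1^{p,q}$ is simply $(\xi_q,\xi_{q-1})\mapsto (c_1(\mathscr M)\smile\xi_{q-1},\,0)$. Hence
\[
E_2^{p,q}\simeq \frac{H^q(X,\Omega^p_X)}{c_1(\mathscr M)\smile H^{q-1}(X,\Omega^{p-1}_X)}\ \oplus\ \ker\bigl(c_1(\mathscr M)\smile\bullet\colon H^{q-1}(X,\Omega^p_X)\to H^q(X,\Omega^{p+1}_X)\bigr).
\]
To prove degeneration at $E_2$ I would argue by dimensions. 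Since $X$ is K\"ahler, $H^k(X,\C)\simeq\bigoplus_{p+q=k}H^q(X,\Omega^p_X)$, and as $c_1(\mathscr M)\in H^{1,1}(X)$ cup product with it sends the Hodge piece of type $(p,q)$ to that of type $(p+1,q+1)$. Inserting this into Proposition~\ref{thm:coho_atiyah}, which gives $\mathbb{H}^k(X;\Omega^\bullet_{\calD_{\mathscr M}})\simeq\operatorname{coker}\gamma_{\mathscr M,k-2}\oplus\ker\gamma_{\mathscr M,k-1}$, and summing the displayed expression for $E_2^{p,q}$ over $p+q=k$ (reindexing $q=k-p$, resp.\ $q-1=k-1-p$), one checks that the two summands match term by term, so that $\sum_{p+q=k}\dim E_2^{p,q}=\dim\mathbb{H}^k(X;\Omega^\bullet_{\calD_{\mathscr M}})$. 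Since the spectral sequence converges and each $E_\infty^{p,q}$ is a subquotient of $E_2^{p,q}$, equality of total dimensions in every degree forces $E_2=E_\infty$, i.e.\ $d_2$ and all higher differentials vanish. (As an a priori simplification one may note that, $\calL=\cO_X$ having rank one, $\rho_{\mathfrak s}=0$ and $\wedge^t\phi_{\mathfrak s}=0$ for $t\ge2$, whence $D^a_{\mathfrak s}=0$ for $a\ge2$ and, after the splitting, $d_T=D^0_{\mathfrak s}+D^1_{\mathfrak s}$ is literally the differential of a double complex.)

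Finally, for the Hodge structure: since $c_1(\mathscr M)\in H^{1,1}(X)$, cup product with it is compatible with the Hodge bigrading, so its image and kernel are sub-Hodge-structures and its cokernel a quotient Hodge structure; thus $\operatorname{coker}\gamma_{\mathscr M,k-2}$ is pure of weight $k$ and $\ker\gamma_{\mathscr M,k-1}$ is a sub-Hodge-structure of $H^{k-1}(X,\C)$, pure of weight $k-1$. By Proposition~\ref{thm:coho_atiyah} their direct sum is $\mathbb{H}^k(X;\Omega^\bullet_{\calD_{\mathscr M}})$, so this group carries a Hodge structure with pure pieces of weights $k-1$ and $k$; the same conclusion can be read off the degenerate spectral sequence, whose graded pieces $E_2^{p,k-p}$ are subquotients of $H^{k-p}(X,\Omega^p_X)$ (weight $k$) and of $H^{k-1-p}(X,\Omega^p_X)$ (weight $k-1$). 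The main obstacle is the dimension count in (ii): one must verify that, after reindexing, the two summands of $\bigoplus_{p+q=k}E_2^{p,q}$ reproduce exactly the cokernel and kernel appearing in Proposition~\ref{thm:coho_atiyah}, which relies on both Fr\"olicher degeneration for $X$ and the compatibility of $\gamma_{\mathscr M}$ with the Hodge bigrading.
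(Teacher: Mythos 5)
Your argument is correct and follows essentially the same route as the paper: part (i) by specializing the $D^1_{\mathfrak s}$ description of $d_1$ to the lifting triple with flat local connections, standard action and $\phi_{\mathfrak s}$ representing $c_1(\mathscr M)$, and part (ii) by using Hodge/Fr\"olicher degeneration to reduce $d_1$ to cup product with $c_1(\mathscr M)$, computing $E_2^{p,q}$, and comparing with the abutment given by Proposition~\ref{thm:coho_atiyah} via the Hodge decomposition of $\Kernel\gamma$ and $\operatorname{coker}\gamma$. Your explicit dimension count making $E_2=E_\infty$ precise, and your use of $c_1(\mathscr M)\in H^{1,1}$ being real to get the weight $(k-1,k)$ Hodge structure, are just spelled-out versions of the paper's own comparison and conjugation argument.
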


\begin{proof}
Part (i) is  clear. 

To prove (ii), observe that if $X$ is a compact K\"ahler manifold, one has the Hodge decomposition for $X$. This implies that the spectral sequence associated with the b\^ete filtration of $\Omega_X^\bullet$ degenerates at the first step. This entails that for any \v{C}ech cocycle $\xi_q \in \check{C}^q(\mathfrak U, \Omega^{p}_X)$, the \v{C}ech cochain $d_X \xi_q$ is actually a $\delta$-coboundary.

This in turn implies  that the differential $d_1$ of the spectral sequence satisfies 
\[
d_1[(\xi_q,\xi_{q-1})] = [(c_1(\mathscr M) \smile \xi_{q-1} , 0)]\ .
\]
We then have 
\[
E_2^{p,q} = \frac{H^q(X;\Omega^p_X)}{\Image\gamma_{\mathscr M,q-1,p-1}} \oplus \Kernel \gamma_{\mathscr M,q-1,p},
\]
where by $\gamma_{\mathscr M,q,p}$ we denote the cup product 
\[
\gamma_{\mathscr M,q,p} = c_1(\mathscr M) \smile \bullet : H^q(X; \Omega_X^p) \to H^{q+1}(X;\Omega_X^{q+1})\ .
\]
On the other hand, we know that the spectral sequence converges to 
\[
\mathbb H^{k}(X; \Omega_{\calD_{\mathscr M}}^\bullet) = \frac{H^k(X; \mathbb C)}{\Image \gamma_{\mathscr M,q-2}} \oplus \Kernel \gamma_{\mathscr M,q-1}\ .
\]

Now, the Hodge decomposition for $X$ induces the decompositions
\begin{equation}\label{deco_hodge}
\frac{\mathbb H^k(X; \Omega_X^\bullet)}{\Image \gamma_{\mathscr M,k-2}} = \bigoplus_{p+q=k} \frac{H^q(X;\Omega^p_X)}{\Image\gamma_{\mathscr M,q-1,p-1}}\ , \quad
\Kernel \gamma_{\mathscr M,k-1} = \bigoplus_{p+q = k-1} \Kernel \gamma_{\mathscr M,q,p}\ ,
\end{equation}
from which we obtain $\mathbb H^k(X; \Omega^\bullet_{\mathcal{D}_{\mathscr M}}) = \bigoplus_{p+q=k} E_2^{p,q}$. Thus the spectral sequence degenerates at the second term.

Moreover, since $c_1(\mathscr M)$ belongs to $H^2(X; \mathbb Z)$, we have the isomorphisms 
\[
\overline{\Kernel \gamma_{\mathscr M,q,p}} = \Kernel \gamma_{\mathscr M,p,q} \ , \qquad 
\overline{\Image \gamma_{\mathscr M,q,p}} = \Image \gamma_{\mathscr M,p,q}\ ,
\]
so the decompositions \eqref{deco_hodge} define Hodge structures of weight $k$ and $k-1$, respectively. 
\end{proof}

\bigskip      
 
\frenchspacing

\def\cprime{$'$} \def\cprime{$'$} \def\cprime{$'$} \def\cprime{$'$}


\begin{thebibliography}{10}


\bibitem{up_to_homotopy}
{\sc C. A. Abad, M. Crainic}, {\em Representations up to homotopy of Lie algebroids}.
    J. Reine Angew. Math., 663 (2012), pp. 91--126.

\bibitem{Davide}
{\sc D. Alboresi,} {\em  Extensions of Courant algebroids and related structures,}
MSc. thesis, SISSA, Trieste, July 2014.

\bibitem{TheThreeGuys}
{\sc D. Alekseevsky, P. W. Michor, and W. Ruppert}, {\em Extensions of {L}ie
  algebras}.
\newblock {\tt arXiv:math/0005042}.

\bibitem{BB}
{\sc A. Be{\u\i}linson, J. Bernstein}, {\em A proof of Jantzen conjectures}.
    In I. {M}. {G}el\cprime fand {S}eminar, Adv. Soviet Math., 16, part 1,
    Amer. Math. Soc., Providence, RI, 1993.
    
\bibitem{brahic} {\sc O. Brahic,} {\em Extension of Lie algebroids,}
{J. Geom. Phys.} 60 (2010), pp. 352--374.
    

\bibitem{BKTV09}
{\sc P. Bressler, M. Kapranov, B. Tsygan, and E. Vasserot}, {\em Riemann-{R}och
  for real varieties}, in Algebra, arithmetic, and geometry: in honor of {Y}u.
  {I}. {M}anin. {V}ol. {I}, vol. 269 of Progr. Math., Birkh\"auser Boston Inc.,
  Boston, MA, 2009, pp. 125--164.

\bibitem{BR-cohom}
{\sc U. Bruzzo and V. N. Rubtsov}, {\em Cohomology of skew-holomorphic {L}ie
  algebroids}, Theoret. Math. Phys, 165 (2010), pp. 1596--1607.
  
 \bibitem{EML1} {\sc S. Eilenberg and S. MacLane,} {\em Cohomology theory in abstract groups. I.} Ann. of Math.   48 (1947), pp. 51--78.
 
 \bibitem{EML2}
\leavevmode\vrule height 2pt depth -1.6pt width 23pt,
 {\em  Cohomology theory in abstract groups. II. Group extensions with a non-Abelian kernel.}  Ann. of Math. 48 (1947), pp. 326--341.
 


\bibitem{EGA3-I}
{\sc A. Grothendieck}, {\em \'{E}l\'ements de g\'eom\'etrie alg\'ebrique.
  {III}. \'{E}tude cohomologique des faisceaux coh\'erents. {I}}, Inst. Hautes
  \'Etudes Sci. Publ. Math., 11 (1961), pp. 1--167.

\bibitem{Hartshorne}
{\sc R. Hartshorne}, {\em Algebraic geometry}, Springer-Verlag, New York, 1977.
\newblock Graduate Texts in Mathematics, No. 52.

\bibitem{Hoch} {\sc G. Hochschild}, {\em  Cohomology classes of finite type and finite dimensional kernels for Lie algebras,}
Am. J. Math. 76 (1954), pp. 763--778.

\bibitem{Hoch-Serre53}
{\sc G. Hochschild and J.-P. Serre}, {\em Cohomology of {L}ie algebras}, Ann.
  of Math. (2), 57 (1953), pp. 591--603.
  
\bibitem{LR} {\sc J. Huebschmann,} {\em  Extensions of Lie-Rinehart algebras and the Chern-Weil
construction.} Contemporary Mathematics 227, pp. 145--176 American Mathematical Society, Providence,
RI, 1999.  
  
  \bibitem{katz-oda}
{\sc N. Katz, T. Oda}, {\em On the differentiation of de {R}ham cohomology classes with 
    respect to parameters}. J. Math. Kyoto Univ.,   8 (1968), pp. 198--213.

\bibitem{GSX}
{\sc C. Laurent-Gengoux, M. Sti{\'e}non, and P. Xu}, {\em Holomorphic {P}oisson
  manifolds and holomorphic {L}ie algebroids}, Int. Math. Res. Not. IMRN,
  (2008), pp. Art. ID rnn 088, 46.

\bibitem{Lu97}
{\sc J.-H. Lu}, {\em Poisson homogeneous spaces and {L}ie algebroids associated
  to {P}oisson actions}, Duke Math. J., 86 (1997), pp. 261--304.
  
 \bibitem{mackenzie}  {\sc K.C.H. MacKenzie},  {\em General theory of Lie groupoids and Lie algebroids,}
  Cambridge University Press, Cambridge, England 2005. London Mathematical Society Lecture Notes Series, No. 213.
 
\bibitem{Mokri}
{\sc T. Mokri}, {\em Matched pairs of {L}ie algebroids}, Glasgow Math. J., 39
  (1997), pp. 167--181.
  
\bibitem{Mori} {\sc M. Mori,} {\em  On the thre-dimensional cohomology group of Lie algebras,} J. Math. Soc. Japan 5 (1953), pp. 171--183.

\bibitem{Rinehart63}
{\sc G. S. Rinehart}, {\em Differential forms on general commutative algebras},
  Trans. Amer. Math. Soc., 108 (1963), pp. 195--222.

\bibitem{Roub80}
{\sc V. N. Rubtsov}, {\em Cohomology of {D}er-complex}, Russian Math. Surv.,
  35(4) (1980), pp. 190--191.

\bibitem{Rub-thesis}
\leavevmode\vrule height 2pt depth -1.6pt width 23pt, {\em On the cohomology of
  the derivation complex of a vector bundle}.
\newblock (Russian), Ph.D. thesis, Minsk, Bielorussian State University, 1983.

\bibitem{Schr1} {\sc O. Schreier,} {\em \"Uber die Erweiterung von Gruppen I.}
{Monatsh. Math. Phys.} 34 (1926), pp. 165--180. 

\bibitem{Schr2}
\leavevmode\vrule height 2pt depth -1.6pt width 23pt, {\em \"Uber die Erweiterung von Gruppen II.}
Abh. Math. Sem. Univ. Hamburg 4 (1925), pp. 321--346. 


\bibitem{Shuk}  {\sc U. Shukla, } {\em  A cohomology for Lie algebras,} J. Math. Soc. Japan 18 (1966), pp. 275--289.

\bibitem{Tennison}
{\sc B. R. Tennison}, {\em Sheaf theory}, Cambridge University Press,
  Cambridge, England, 1975.
\newblock London Mathematical Society Lecture Note Series, No. 20.

\bibitem{PietroCEJM}
{\sc P. Tortella}, {\em {$\Lambda$}-modules and holomorphic {L}ie algebroid
  connections}, Central Eur. J. Math., 10 (2012), pp. 1422--1441.

\bibitem{Voisin}
{\sc C. Voisin}, {\em Hodge theory and complex algebraic geometry. {I}},
Cambridge Studies in Advanced Mathematics 76, Cambridge University
  Press, Cambridge,  England 2007.

\end{thebibliography}
\end{document}